\title{\bf
A sufficient condition for existence of iterative roots of PM functions
without the characteristic endpoints
condition\thanks{The author Xiao Tang was supported by NSFC grant \#12001537 and 20XLB033. The author Lin Li was supported by NSFC grant $\#$11671061 and  Zhejiang Provincial Natural Science
	Foundation of China under Grant No.LY18A010017.}
}
\author{
{\normalsize
{\sc Xiao Tang}\,$^{a}$,
 ~~~{\sc Lin Li}\,$^{b}$\thanks{Corresponding author: mathll@163.com}
       }
\\
$^{a}${\small  School of Mathematical Science, Chongqing Normal University, }
\\
{\small Chongqing 401331, P.R. China}

\\
$^{b}${\small  College of Mathematics, Physics and Information Engineering, }
\\
{\small Jiaxing University, Jiaxing 314001, P.R. China}

}
\date{}
\begin{document}
	
\maketitle

	\numberwithin{equation}{section}
	\newtheorem{lm}{Lemma}[section]
	\newtheorem{thm}{Theorem}[section]
	\newtheorem{remk}{Remark}
	\newtheorem{exam}{Example}
	\newtheorem{prop}{Proposition}
	\newtheorem{cor}{Corollary}
	\newtheorem{defn}{Definition}
	
\begin{abstract}
For PM functions of height 1, the existence of continuous iterative roots of any order was obtained under the characteristic endpoints condition. This raises an open problem about iterative roots without this condition, called characteristic endpoints problem. This problem is solved almost completely when the number of forts is equal to or less than the order. In this paper, we study the case that the number of forts is greater than the order and give a sufficient condition for existence of continuous iterative roots of order $2$ with height 2, answering the characteristic endpoints problem partially.
\end{abstract}

\vskip 0.1cm
{\bf Keywords:}
Iterative root; piecewise monotone function; characteristic interval;  characteristic endpoints condition
		
\vskip 0.1cm
{\bf AMS 2010 Subject Classification:} 39B12; 37E05; 26A18

	\setlength\arraycolsep{4pt}
	\baselineskip 16pt
	\parskip 0.4cm


\section{Introduction}
Let $I:=[a,b]\subset\mathbb{R}$ be a compact interval. Given a continuous function $F:I\to I$, if there exists a continuous function $f:I\to I$ such that
\begin{equation}
f^n(x)=F(x),~~~~~\forall x\in I,
\end{equation}
where $n>1$ is an integer, $f^n$ is $n$-th iterate of $f$ and defined recursively by $f^n(x):=f(f^{n-1}(x))$ and $f^0(x):=x$ for all $x\in I$,  then we call that $f$ is a continuous {\it iterative root} of $F$ of order $n$.

In the theory of dynamical systems, embedding flow is an important problem which is a bridge between discrete and continuous dynamical systems (\cite{Baron-Jar,Fort,Irwin,PM,TS,Zdun-Solarz}). As a weak vision of embedding flow, iterative roots are studied extensively, especially in 1-dimensional case (see \cite{Babbage,Kuczma61,Solarz,ZY,LYZ,LJLZ} and references therein).

It is known that there are plentiful results on iterative roots of continuous monotone self-mappings (\cite{Kuczma,KCG}). As a generalization of monotone functions, a kind of non-monotone functions with finite non-monotone points was studied since 80's (\cite{ZY,Zhang97}). A continuous self-mapping $F:I\to I$ is said to be a {\it piecewise monotone function} (abbreviated as {\it PM function}) if $F$ has finitely many non-monotone points (or forts), named by $\{c_i\}_{i=1}^{v}$ for some $v\in \mathbb{N}$ such that
$$
c_0:=a<c_1<c_2<\cdots<c_v<c_{v+1}:=b.
$$
Clearly, $F$ is strictly monotone on each subinterval $[c_i,c_{i+1}]$ for  $0\le i\le v$ and such subinterval $[c_i,c_{i+1}]$ is called a {\it lap} of $F$. Let $N(F)$ be the number of forts of $F$. It is proved (\cite{ZY,Zhang97}) that the sequence $\{N(F^i)\}_{i\in \mathbb{N}}$ is nondecreasing, that is,
$$
0=N(F^0)\le N(F)\le N(F^2)\le \cdots \le N(F^i)\le \cdots.
$$
Furthermore, if $N(F^{i_0})=N(F^{i_0+1})$ for some integer $i_0\ge0$, then $N(F^{i_0})=N(F^{i_0+j})$ for all integers $j\ge1$. The smallest integer $i_0\in \mathbb{N}$ such that $N(F^{i_0})=N(F^{i_0+1})$ is called {\it nonmonotonicity height} $H(F)$ (simply {\it height}) of $F$ if such a $i_0$ exists and $H(F)=\infty$ otherwise. We denote the set of all piecewise monotone functions  by ${\cal PM}(I,I)$.
One can prove that there exists a lap, called {\it characteristic interval} $K(F)$ of $F$, that covers the range of $F$ if and only if $H(F) =1$. An important result about continuous iterative roots of PM functions with height 1 is the following.
\begin{center}
\begin{minipage}{128mm}
\noindent
{\bf Theorem A} (Theorem 4 in \cite{Zhang97}).
{\it
Let $F\in {\cal PM}(I,I)$ be of
height 1. Suppose that
\begin{description}
\item [(K$^+$) ] $F$ is strictly increasing
on its characteristic interval $K(F)=[a',b']$, and that

\item[(K$^+_0$)] $F$ on $I$ cannot reach $a'$ and $b'$ unless $F(a')=a'$ or
$F(b')=b'$.
\end{description}
Then,
for any integer $n>1$, $F$ has continuous iterative roots of order $n$.
Conversely,
conditions {\bf  (K$^+$)} and {\bf  (K$^+_0$)} are necessary for $n>N(F)+1$.
}
\end{minipage}
\end{center}
The converse part of this theorem suggests an open problem
(see \cite{ZY} or \cite{Zhang97}):
{\it Does a function $F\in {\cal PM}(I,I)$ with $H(F)= 1$ have an iterative root of an order $n\leq N(F)+1$
if condition {\bf  (K$^+_0$)}, called  `characteristic endpoints condition', is not satisfied?
}
The open problem is answered by the second author and Zhang (\cite{LZ}) in many cases. They discussed existence and nonexistence results of continuous iterative roots, which are shown in Table \ref{TnoCECI}. 
\newcommand{\tabincell}[2]{\begin{tabular}{@{}#1@{}}#2\end{tabular}}
\begin{table}
	\centering
	\renewcommand\arraystretch{1.5}
	\begin{tabular}{|c|c|c|}
		\hline
		Order &  Properties of $f$ & \tabincell{c}{Existence or\\ Nonexistence} \\
		\hline
	\multirow{2}*{$n= 2$}
		 & $f$  on $[a',b'] \uparrow$, $H(f)=1$ & Nonexistence \\ \cline{2-3}
		&  $f$  on $[a',b'] \downarrow$, $H(f)=1$ & Existence
		\\
		\hline
	\multirow{2}*{$2<n<N(F)$}
		 & $f$  on $[a',b'] \uparrow$, $H(f)<n$ & Nonexistence
		\\
		\cline{2-3}
		 & $f$  on $[a',b'] \downarrow$, $H(f)<n-1$ & Nonexistence 
		\\
		\hline
		\multirow{3}*{$n=N(F)$}
		 & $f$  on $[a',b'] \uparrow$, $H(f)\le n$ &  Nonexistence 
		\\
		\cline{2-3}
		 & $f$  on $[a',b'] \downarrow$, $H(f)<n-1$ & Nonexistence
		\\
		\cline{2-3}
	 & $f$  on $[a',b'] \downarrow$, $H(f)=n$ & Existence 
		\\
		\hline
	\multirow{2}*{$n=N(F)+1$}
	 & $f$  on $[a',b'] \uparrow$, $H(f)\le n$ &Nonexistence
		\\
	\cline{2-3}
 & $f$  on $[a',b'] \downarrow$, $H(f)<n-1$ & Nonexistence 
		\\
		\hline
	\end{tabular}
	\caption{The results of iterative roots for $F$ being strictly increasing on $[a',b']$  without the characteristic endpoints condition, where the symbols $\uparrow$ and $\downarrow$ denote strictly increasing and strictly decreasing, respectively.}
	\label{TnoCECI}
\end{table}

In this paper, because of complexity of iteration, we continue to study the open problem only in the case of order $n=2$. According to \cite[Lemma 2]{LYZ}, we only need to consider the case $H(f)=n=2$. Moreover, the number $N(F)$ of forts of $F$ must be greater than $2$. In fact, when $N(F)=1$, if $F$ has a continuous iterative roots of order $2$, then we get $N(F)=N(f^2)>N(f)\ge 1$, implying that $N(F)\ge 2$, a contradiction to the fact that $N(F)=1$. As shown in Table \ref{TnoCECI}, case $N(F)=n=2$ is solved completely. Thus, the case we studied is $H(f)=n=2<N(F)$,
 which is not  considered in \cite{LZ}.

This paper is organised as follows. In section 2, we
give  a sufficient condition for existence of continuous iterative roots which are  strictly increasing on $K(F)$
and also give a sufficient condition for  existence of continuous ones 
which are  strictly decreasing on $K(F)$.
Section 3 is devoted to present
some auxiliary lemmas. Finally, we give the proofs of our Theorems in section 4.

\section{Main results}

When $F$ is strictly increasing on its characteristic interval $[a',b']$, it has two classes of continuous iterative roots: strictly increasing on $[a',b']$ and strictly decreasing on $[a',b']$. We will discuss these two classes of  iterative roots of order $2$ with height 2 in the following two subsections, separately. We only give our results in this section and put their proofs in section 4.

	
	

\subsection{Roots increasing on characteristic interval}
The following Theorem \ref{LI} deals with case the characteristic interval of $F$ is  the first lap, that is, $K(F)=[c_0,c_1]$.

\begin{thm}
Let $F\in {\cal PM}(I,I)$ with $H(F)=1$ and $K(F)=[c_0,c_1]$ such that {\bf  (K$^+$)} holds. Suppose that $F(c_3)= \max_{x\in[c_0,c_u]}F(x)$, where $c_u\in \{c_4,c_5,...,c_{v+1}\}$ is the smallest point such that $F(c_u)=c_0$, and  that $F$ satisfies either
\begin{align}
F(c_2)=c_0=F(c_0),~~~~~ &F(c_1)<F(c_3)<c_1=M~~\text{or},
\label{lf01}\\
c_0<F(c_2)<F(c_0),~~~~~ &F(c_3)=c_1=F(c_1)~~\text{or},
\label{lf02}
\\
c_0<F(c_2)<F(c_0),~~~~~ &F(c_1)<F(c_3)<c_1,
\label{lf04}
\end{align}
where $M=\max_{x\in I} F(x)$. Then $F$ has a continuous iterative root $f$ of order $2$ with
$H(f)=2$ such that $f$ is strictly increasing on $[c_0,c_1]$.
\label{LI}
\end{thm}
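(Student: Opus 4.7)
The plan is to construct $f$ in two stages: first on the characteristic interval $K(F)=[c_0,c_1]$, then extend to $(c_1,b]$ by a combination of pullback and implicit construction. On $[c_0,c_1]$ the map $F$ is continuous, strictly increasing, and self-mapping because $H(F)=1$, so the classical theory of iterative roots for monotone self-maps (see e.g.\ \cite{Kuczma,KCG}) yields a strictly increasing continuous $\phi:[c_0,c_1]\to[c_0,c_1]$ with $\phi\circ\phi=F|_{[c_0,c_1]}$. This $\phi$ will serve as $f|_{[c_0,c_1]}$, and the classical construction retains enough freedom (choice of a homeomorphism between two appropriate subintervals) to be tailored to each hypothesis set \eqref{lf01}, \eqref{lf02}, \eqref{lf04}.

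The naive extension $f(x)=\phi^{-1}(F(x))$ on $(c_1,b]$ is not directly usable, because in general the image $F((c_1,b])$ strictly exceeds the range $\phi([c_0,c_1])$. For instance, $F$ hits $c_0$ at the fort $c_u$, while whenever $F(c_0)>c_0$ (cases \eqref{lf02} and \eqref{lf04}) one has $\phi(c_0)>c_0$, so $\phi^{-1}(c_0)$ is undefined; an analogous failure occurs at the top of the range. Consequently I would allow $f$ to leave the characteristic interval on a subinterval of $(c_1,b]$. Concretely, I split $(c_1,b]$ at a point $\alpha$ chosen so that $f$ takes values in $(c_1,b]$ on $[c_1,\alpha]$ and returns to $[c_0,c_1]$ on $[\alpha,b]$; on the latter piece I use $f(x)=\phi^{-1}(F(x))$, while on the former I prescribe $f$ so that $f(f(x))=F(x)$ holds with $f(x)$ itself in $(c_1,b]$. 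The hypotheses \eqref{lf01}--\eqref{lf04} on $F(c_1),F(c_2),F(c_3)$ are precisely what make this implicit relation consistent and continuous across the splits.

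Verification then proceeds by checking: (a) continuity of $f$ at the gluing points $c_1$ and $\alpha$; (b) the functional equation $f^2=F$ on each piece; (c) the height count $N(f)<N(f^2)=N(f^3)$. Step (c) is crucial: in the naive construction $f$ would inherit a fort from every fort of $F$, giving $N(f)=N(F)$ and hence $H(f)=1$, contrary to the conclusion. In the present construction the piece of $f$ defined on $[c_1,\alpha]$ traverses several forts of $F$ monotonically, so $N(f)<N(F)$, and the ``missing'' forts reappear in $f^2$ as preimages of $f$'s forts under $f$.

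The main obstacle is simultaneously arranging $\phi$, the split point $\alpha$, and the values $f(c_i)$ so that (a), (b), and (c) all hold. The proof necessarily splits into the three cases \eqref{lf01}, \eqref{lf02}, \eqref{lf04}, each requiring a bespoke choice of data; in each case the hypothesis $F(c_3)=\max_{x\in[c_0,c_u]}F(x)$ restricts where the maximum of $F$ off the characteristic interval lies, and the identities in \eqref{lf01}--\eqref{lf04} pin down the boundary behaviour, together making the extension feasible. The most delicate step is the height count in (c): verifying $N(f^2)=N(f^3)$ amounts to showing that no further forts arise in the third iterate, which requires tracking the positions of $f^2(c_i)$ relative to the forts of $f$ on each subinterval.
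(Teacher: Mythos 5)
Your Step 1 matches the paper's (a monotone square root $f_0$ of $F|_{[c_0,c_1]}$ with adjustable boundary data), and you correctly identify why the naive pullback $\phi^{-1}\circ F$ cannot be used on all of $(c_1,b]$. But your global structure for the extension is inconsistent, and this is a genuine gap rather than a fixable detail. You place the ``outside'' values on the lap-block $[c_1,\alpha]$ adjacent to the characteristic interval and return into $[c_0,c_1]$ on the far block $[\alpha,b]$. Since $F(I)\subseteq[c_0,c_1]$, the equation $f^2=F$ forces $f([c_1,\alpha])\subseteq[\alpha,b]$: if $f(x)\in(c_1,\alpha)$ then $f(f(x))\in(c_1,b]$, which exceeds $F(x)$. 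But then at $c_1$ the left-hand value of $f$ is $\phi(c_1)\le c_1$ while the right-hand values are at least $\alpha>c_1$, so $f$ cannot be continuous at $c_1$. In cases \eqref{lf01} and \eqref{lf04} one even has $\phi(c_1)<c_1$ strictly (since $\phi(c_1)=c_1$ would force $F(c_1)=c_1$); and in case \eqref{lf02}, where $\phi(c_1)=c_1$ is possible, continuity would force $f(x)\to c_1<\alpha$ as $x\to c_1^{+}$, contradicting $f(x)\ge\alpha$. So the single-split arrangement you propose is unrealizable in all three cases, and the implicit definition of $f$ on $[c_1,\alpha]$ (which amounts to solving $F\circ f=\phi\circ F$ with a continuous branch of $F^{-1}$ across the many forts of $F$ in $[\alpha,b]$) never gets off the ground.

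The orientation must be reversed, and that is the key idea of the paper's proof: $f$ returns into $[c_0,c_1]$ on the laps \emph{nearest} the characteristic interval and leaves it on the \emph{far} laps. Concretely, choose $f_0$ with the specific boundary values $f_0(c_0)=F(c_2)$, $f_0(c_1)=F(c_3)$ --- this is exactly the ``tailoring'' you gesture at but never pin down, and it is what Lemma \ref{iev} together with Kuczma's Theorem 15.7 provides. Then the range of $f_0$ is $[F(c_2),F(c_3)]$, so $f:=f_0^{-1}\circ F$ is well defined on $[c_1,c_2]\cup[c_2,c_3]$, mapping these two laps into $[c_0,c_1]$ with $f(c_1^{+})=f_0(c_1)$, which gives continuity at $c_1$. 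The remaining laps are sent outside in two stages: on $[c_3,c_u]$ set $f:=f_1^{-1}\circ F$ with values in $[c_1,c_2]$, well defined precisely because $F(c_3)=\max_{x\in[c_0,c_u]}F(x)$ and the range of $f_1:=f_0^{-1}\circ F|_{[c_1,c_2]}$ is $[c_0,F(c_3)]$; on $[c_u,c_{v+1}]$ set $f:=f_2^{-1}\circ F$ with values in $[c_2,c_3]$, where $f_2:=f_0^{-1}\circ F|_{[c_2,c_3]}$ has range all of $[c_0,c_1]$ since $f_2(c_2)=c_0$ and $f_2(c_3)=c_1$. The switch occurs exactly at $c_u$, where both formulas give the value $c_2$ --- this is where the hypothesis defining $c_u$ enters, and no implicit equation is needed anywhere. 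Finally, your plan (c) of comparing $N(f^2)$ with $N(f^3)$ is far heavier than necessary: with this construction $f(I)\not\subseteq[c_0,c_1]$ while $f^2(I)\subseteq[c_0,c_1]$, so Lemma \ref{height} yields $H(f)=2$ at once.
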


Clearly, $F$ satisfying one of conditions (\ref{lf01})-(\ref{lf04}) with condition $F(c_u)=c_0$ for $c_u\in \{c_4,c_5,...,c_{v+1}\}$ in Theorem~\ref{LI} does not satisfy {\bf  (K$^+_0$)}. In order to illustrate these conditions in Theorem~\ref{LI}, we give an example in Figure 1. From Figure 1, it is easy to see that $F\in {\cal PM}(I,I)$, $H(F)=1$, $K(F)=[c_0,c_1]$, and $F$ is strictly increasing on $[c_0,c_1]$. Furthermore, $F$ satisfies condition \eqref{lf04} and $c_6$ is the smallest point in $\{c_4,c_5,...,c_8\}$ such that $F(c_6)=c_0$ and $F(c_3)=\max F|_{[c_0,c_6]}$. Therefore, $F$ shown in Figure \ref{ilf} satisfies the conditions of Theorem \ref{LI}.

\begin{figure}[H]
	\begin{minipage}[t]{0.5\textwidth}
		\includegraphics[scale=0.3]{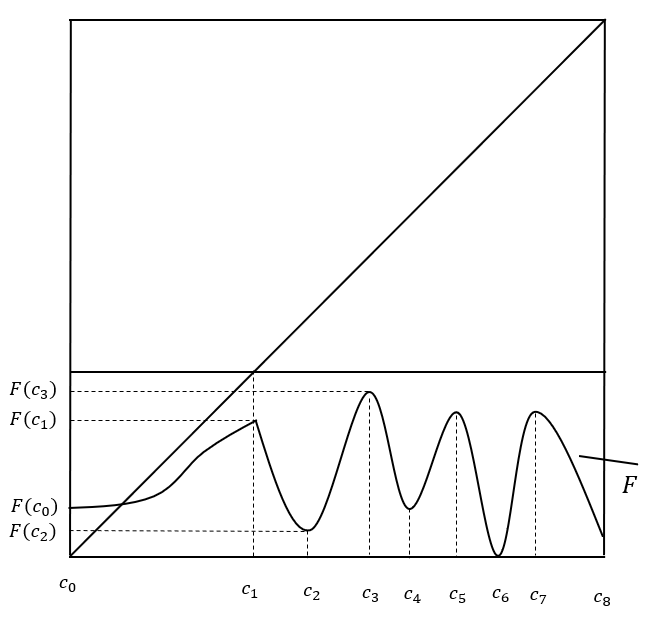}
		\caption{$F$ satisfies conditions of \protect\\Theorem \ref{LI}.}
		\label{ilf}
	\end{minipage}
	\begin{minipage}[t]{0.5\textwidth}
		\includegraphics[scale=0.3]{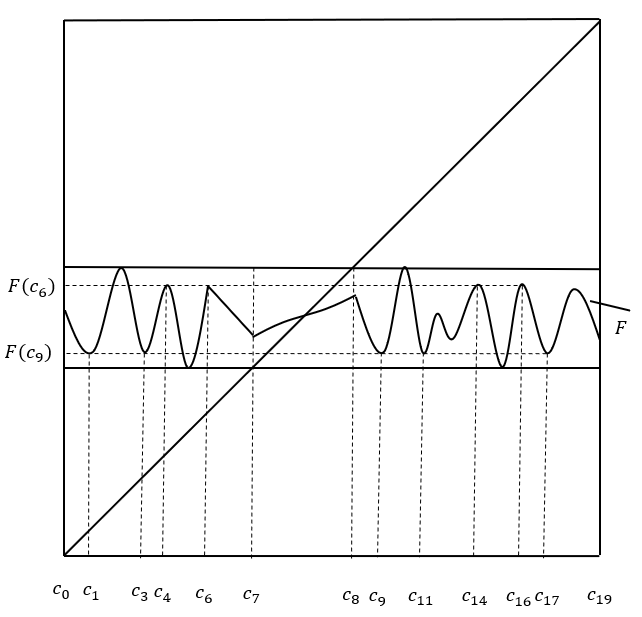}
		\caption{$F$ satisfies conditions of \protect\\ both Theorems \ref{MI} and \ref{MD}.}
		\label{imf}
	\end{minipage}
\end{figure}

When the characteristic interval of $F$ is the last lap, that is, $K(F)=[c_v,c_{v+1}]$, by a change of coordinates $\varphi(x)=-x+a+b$, we get a transformed map $G:=\varphi^{-1}\circ F\circ \varphi.$ Clearly, $G\in{\cal PM}(I,I)$, $H(G)=1$ and the  characteristic interval of $G$ is the first lap of $G$. Thus, according to Theorem \ref{LI}, by imposing corresponding conditions on $G$, the function $G$ has a continuous iterative root $g$ of order $2$ with $H(f)=2$ such that $g$ is strictly increasing on the characteristic interval of $G$. Then
$H(\varphi\circ g\circ \varphi^{-1})=2$, $\varphi\circ g\circ \varphi^{-1}$ is strictly increasing on $[c_v,c_{v+1}]$ and is a continuous iterative root of order $2$ of $F$.

Next, we consider the case that the characteristic interval of $F$ neither is the first lap nor the last one, that is, $K(F)=[c_k,c_{k+1}]$ for some $1\le k\le v-1$.

\begin{thm}
Let $F\in{\cal PM}(I,I)$ with $H(F)=1$ and $K(F)=[c_k,c_{k+1}]$ for some $1\le k\le v-1$ such that {\bf  (K$^+$)} holds but  {\bf  (K$^+_0$)} does not hold. Suppose that $F$ satisfies either
\begin{align}
&c_k<F(c_{k+2})<F(c_k),~~~F(c_{k-1})=c_{k+1}=F(c_{k+1}),~~\text{or}
			\label{f_01}
			\\
			&c_k=F(c_k)=F(c_{k+2}), ~~~F(c_{k+1})<F(c_{k-1})<c_{k+1},~~\text{or}
			\label{f_02}
			\\
			&c_k<F(c_{k+2})<F(c_k), ~~~F(c_{k+1})<F(c_{k-1})<c_{k+1}.
			\label{f_03}
			\end{align}
Suppose further that there exist forts
			\begin{equation}
\label{cc}
			c_{\ell_0}:=c_{k-1}>c_{\ell_1}>c_{\ell_2}>\cdots>c_{\ell_s}> c_{\ell_{s+1}}:=c_0
			\end{equation}
			such that
\begin{equation}
			F(c_{\ell_j})=\begin{cases}
			F(c_{k-1})~~~&\text{for}~~~ j\equiv 0 ~\text{or}~ 1(\!\!\!\!\mod 4),\\
			F(c_{k+2})~~~&\text{for}~~~ j\equiv 2 ~\text{or}~ 3(\!\!\!\!\mod 4),
			\end{cases}
			\label{jointp}
			\end{equation}
except for $j=s+1$,
			\begin{align}
			&\max_{x\in [c_{\ell_{j+1}},c_{\ell_j}]} F(x) =F(c_{k-1})~~\text{for}~~ j\equiv 0,1 ~\text{or}~ 3(\!\!\!\!\mod 4),
			\label{upexten1}
			\\&
	\max_{x\in [c_{\ell_{j+1}},c_{\ell_j}]} F(x)=F(c_{k+2})~~\text{for}~~ j\equiv 1,2 ~\text{or}~ 3(\!\!\!\!\mod 4),
			\label{midexten}
			\end{align}
and forts
			\begin{equation}
			c_{r_0}:=c_{k+2}<c_{r_1}<c_{r_2}<\cdots<c_{r_t}<c_{r_{t+1}}:=c_{n+1}
			\end{equation}
			such that
			\begin{equation}
			F(c_{r_j})=\begin{cases}
			F(c_{k+2})~~~&\text{for}~~~ j\equiv 0 ~\text{or}~ 1(\!\!\!\!\mod 4),\\
			F(c_{k-1})~~~&\text{for}~~~ j\equiv 2 ~\text{or}~ 3(\!\!\!\!\mod 4),
			\end{cases}
			\label{extenck+2cr1}
			\end{equation}
			except for $j=t+1$,
\begin{align}
			&\min_{x\in [c_{r_j},c_{r_{j+1}}]} F(x)=F(c_{k+2})~~\text{for}~~ j\equiv0, 1 ~\text{or}~ 3(\!\!\!\!\mod 4),
			\label{midextenr1}
			\\&
\min_{x\in [c_{r_j},c_{r_{j+1}}]} F(x)=F(c_{k-1})~~\text{for}~~ j\equiv1, 2~\text{or}~ 3(\!\!\!\!\mod 4).
			\label{midextenr2}
			\end{align}
			Then $F$ has a continuous  iterative root $f$ of order $2$ with $H(f)=2$ such that $f$ is strictly increasing on $[c_k,c_{k+1}]$.
			\label{MI}
		\end{thm}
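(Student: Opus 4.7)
\smallskip
\textbf{Proof proposal.}
My plan is to build $f$ as a continuous PM function in two stages: first on the characteristic interval $[c_k,c_{k+1}]$, where the problem reduces to the classical monotone theory, and then by a carefully controlled \emph{zigzag extension} to the rest of $I$, governed by the alternating fort data $\{c_{\ell_j}\}$ and $\{c_{r_j}\}$. On $[c_k,c_{k+1}]$, the map $F|_{[c_k,c_{k+1}]}$ is a strictly increasing self-map (by \textbf{(K$^+$)}), so by the monotone iterative-root theory it admits a strictly increasing continuous iterative root $\phi:[c_k,c_{k+1}]\to[c_k,c_{k+1}]$ of order $2$. I would choose $\phi$ so that the points $p:=\phi^{-1}(F(c_{k-1}))$ and $q:=\phi^{-1}(F(c_{k+2}))$ exist and fall in the interior of $[c_k,c_{k+1}]$; this is possible because \eqref{f_01}--\eqref{f_03} place $F(c_{k-1})$ and $F(c_{k+2})$ in the interior of the range of $\phi$.

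Next I would extend $f$ to each complementary block. Consider first the left block $[c_0,c_k]$ cut by the chosen forts $c_{\ell_s}<\dots<c_{\ell_1}<c_{\ell_0}=c_{k-1}$. The hypotheses \eqref{jointp} together with the extremality conditions \eqref{upexten1}--\eqref{midexten} say that $F$ on $[c_{\ell_{j+1}},c_{\ell_j}]$ attains its maximum $F(c_{k-1})$ or $F(c_{k+2})$ in an alternating pattern controlled by $j\bmod 4$, and stays below this maximum otherwise. I would then define $f$ on each piece $[c_{\ell_{j+1}},c_{\ell_j}]$ as a PM function of exactly one fort, sending the endpoints alternately to $p$ and $q$ so that $\phi\circ f$ reproduces $F$ on that piece. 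The alternation modulo $4$ in \eqref{jointp} is precisely what guarantees that the right endpoint value of $f$ on $[c_{\ell_{j+1}},c_{\ell_j}]$ agrees with its left endpoint value on $[c_{\ell_j},c_{\ell_{j-1}}]$, which secures continuity at each joining fort. I would carry out the symmetric construction on the right block $[c_{k+1},c_{n+1}]$ using $\{c_{r_j}\}$ and the minimum conditions \eqref{midextenr1}--\eqref{midextenr2}. Finally, on the two ``innermost'' strips $[c_{k-1},c_k]$ and $[c_{k+1},c_{k+2}]$ I would define $f$ monotonically, matching the values prescribed by $\phi$ at $c_k,c_{k+1}$ and by the zigzag at $c_{k-1},c_{k+2}$.

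Once $f$ is assembled, the verification has three ingredients: (i) continuity at every $c_{\ell_j}$ and $c_{r_j}$, which I expect to follow mechanically from \eqref{jointp} and \eqref{extenck+2cr1}; (ii) the iterative equation $f^2=F$, which splits block by block into $\phi\circ f=F$ and is checked using that $f$ maps each block into $[c_k,c_{k+1}]$ with the correct endpoint values and the correct unique fort; and (iii) $H(f)=2$, which amounts to counting that $f$ has strictly more forts than $F$ while $f^2=F$ already stabilises, so $N(f)<N(f^2)=N(f^3)$. The \emph{main obstacle} I anticipate is the combinatorial bookkeeping in step (ii): the alternation modulo $4$ in \eqref{jointp} and \eqref{extenck+2cr1} encodes the fact that each application of the not-yet-monotone $f$ folds two adjacent pieces onto one image, and two such folds must recompose to $F$'s single-fold pattern on each $[c_{\ell_{j+1}},c_{\ell_j}]$. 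Tracking these folds uniformly for all residues $j\bmod 4$, and checking that the extremal-value conditions \eqref{upexten1}--\eqref{midextenr2} are both necessary and sufficient for the fold to close up correctly, is the technical heart of the argument, and it is where the auxiliary lemmas promised in Section~3 will do the heavy lifting.
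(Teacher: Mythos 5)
Your plan breaks down at its central step, and it breaks down exactly where the difficulty of this theorem lives: you cannot send every outer block into the characteristic interval. Since {\bf (K$^+_0$)} fails, there is a point $x_0$ outside $[c_k,c_{k+1}]$ with $F(x_0)=c_{k+1}$ while $F(c_{k+1})<c_{k+1}$, or $F(x_0)=c_k$ while $F(c_k)>c_k$; note that the hypotheses deliberately leave the minimum of $F$ unconstrained on blocks with $j\equiv 0 \pmod 4$ and the maximum unconstrained on blocks with $j\equiv 2\pmod 4$, precisely to admit such points. Your prescription $f=\phi^{-1}\circ F$ on such a block requires $F(x_0)\in\phi([c_k,c_{k+1}])$; if $F(x_0)=c_{k+1}$ this forces $\phi(c_{k+1})=c_{k+1}$ and hence $F(c_{k+1})=\phi^2(c_{k+1})=c_{k+1}$, a contradiction (and symmetrically at $c_k$). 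So $\phi^{-1}\circ F$ is simply undefined there, no matter how $\phi$ is normalized. Moreover, even ignoring well-definedness, your construction yields $f(I)\subseteq[c_k,c_{k+1}]$, which by Lemma \ref{height} gives $H(f)=1$, contradicting both the required $H(f)=2$ and the known nonexistence of roots increasing on $K(F)$ with $H(f)=1$ (Table \ref{TnoCECI}); your item (iii), which asserts $N(f)<N(f^2)$, is inconsistent with your own construction. A smaller inaccuracy: in cases \eqref{f_01} and \eqref{f_02} the values $F(c_{k-1})=c_{k+1}$, resp.\ $F(c_{k+2})=c_k$, are endpoints, not interior points of the range of $\phi$ as you claim.

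The paper's proof avoids all of this with a two-tier, \emph{three}-target construction that your two-value zigzag misses. First, the root $f_k$ on $[c_k,c_{k+1}]$ is built (via Lemma \ref{iev}) with the prescribed endpoint values $f_k(c_k)=F(c_{k+2})$ and $f_k(c_{k+1})=F(c_{k-1})$, so that the two decreasing bridge maps $f_{k-1}:=f_k^{-1}\circ F$ on $[c_{k-1},c_k]$ and $f_{k+1}:=f_k^{-1}\circ F$ on $[c_{k+1},c_{k+2}]$ have the \emph{enlarged} ranges $[F(c_{k+2}),c_{k+1}]$ and $[c_k,F(c_{k-1})]$. Each remaining lap is then assigned by the residue of $j$ modulo $4$ to one of three image laps of $f$: into $[c_k,c_{k+1}]$ via $f_k^{-1}\circ F$ when both extremal bounds hold ($j\equiv 1,3$), into $[c_{k+1},c_{k+2}]$ via $f_{k+1}^{-1}\circ F$ when only the maximum is controlled by \eqref{upexten1} ($j\equiv 0$), so that values of $F$ reaching down to $c_k$ are absorbed, and into $[c_{k-1},c_k]$ via $f_{k-1}^{-1}\circ F$ when only the minimum is controlled ($j\equiv 2$), absorbing values up to $c_{k+1}$. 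With this assignment $f^2=f_m\circ f_m^{-1}\circ F=F$ holds lap by lap, continuity at the joining forts follows mechanically from \eqref{jointp} and \eqref{extenck+2cr1}, and $H(f)=2$ follows from Lemma \ref{height} because $f(I)\not\subseteq[c_k,c_{k+1}]$ while $f^2(I)=F(I)\subseteq[c_k,c_{k+1}]$. In short, the mod-$4$ alternation encodes a cycle through three image laps of $f$, not an alternation between two endpoint targets $p,q$ inside the characteristic interval; without this extra level your folds cannot close up, and the laps witnessing the failure of {\bf (K$^+_0$)} cannot be handled at all.
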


In order to illustrate these conditions in Theorem~\ref{MI}, we also give an example in Figure 2. From Figure 2, one can check that $F\in{\cal PM}(I,I), H(F)=1, K(F)=[c_7,c_8]$ and $F$ is strictly increasing on $[c_7,c_8]$. Since $F(c_5)=F(c_{15})=c_7<F(c_7)$ and $F(c_2)=F(c_{10})=c_8>F(c_8)$, we conclude that $F$ does not satisfy the characteristic endpoints condition. Moreover, it is easy to see that $F$ satisfies  \eqref{f_03} and there exist forts
\begin{equation*}
c_{\ell_0}:=c_6>c_{\ell_1}:=c_4>c_{\ell_2}:=c_3>c_{\ell_3}:=c_1> c_{\ell_4}:=c_0
\end{equation*}
such that $F(c_4)=F(c_6), F(c_1)=F(c_3)=F(c_9)$,
\begin{align*}
&\max_{x\in [c_0,c_1]} F(x)=\max_{x\in [c_3,c_4]} F(x)=\max_{x\in [c_4,c_6]} F(x)=F(c_6),\\
&\min_{x\in [c_0,c_1]} F(x)=\min_{x\in [c_1,c_3]} F(x)=\min_{x\in [c_3,c_4]} F(x)=F(c_9),
\end{align*}
and forts
\begin{equation*}
c_{r_0}:=c_9<c_{r_1}:=c_{11}<c_{r_2}:=c_{14}<c_{r_3}:=c_{16}<c_{r_4}:=c_{17}<c_{r_5}:=c_{19}
\end{equation*}
such that $F(c_9)=F(c_{11})=F(c_{17}), F(c_6)=F(c_{14})=F(c_{16})$,
\begin{align*}
&\min_{x\in[c_9,c_{11}]} F(x)=\min_{x\in[c_{11},c_{14}] }F(x)=\min_{x\in[c_{16},c_{17}]} F(x)=\min_{x\in[c_{17},c_{19}]} F(x)=F(c_9),\\
&\max_{x\in[c_{11},c_{14}]} F(x)=\max_{x\in[c_{14},c_{16}]} F(x)=\max_{x\in[c_{16},c_{17}]} F(x)=F(c_6).
\end{align*}
The discussion above shows that $F$ whose graph given in Figure \ref{imf} satisfies the conditions of Theorem \ref{MI}.

\subsection{Roots decreasing on characteristic interval}

Before presenting main results in this subsection, we introduce the concept of reversing-correspondence, which can be found in \cite{Kuczma,LJLZ}.
\begin{defn}
Let $[\alpha,\beta]\subset\mathbb{R}$ be a compact interval. A strictly increasing continuous function $\phi:[\alpha,\beta]\to[\alpha,\beta]$  is said to be reversing-correspondence if
\begin{description}
\item [(i)] there exists a point $\xi\in Fix\phi$ and a strictly decreasing map $\omega$ mapping $Fix\phi$ onto itself such that $\omega(\xi)=\xi$, where $Fix\phi$ denotes the set of all fixed points of $\phi$,
\item[(ii)] $\alpha$ and $\beta$ both belong to $Fix\phi$ or not,
\item[(iii)] for every pair of consecutive fixed points $\xi_1$ and $\xi_2$ such that $\xi_1<\xi_2\le \xi$, we have $(\phi(x)-x)(\phi(y)-y)<0$ for $(x,y)\in(\xi_1,\xi_2)\times(\omega(\xi_2),\omega(\xi_1))$.
\end{description}
\end{defn}

Same as discussion for roots increasing on characteristic interval, we first consider the case that the first lap is the characteristic interval, that is, $K(F)=[c_0,c_1]$.

\begin{thm}
Let $F\in {\cal PM}(I,I)$ with $H(F)=1$ and $K(F)=[c_0,c_1]$ such that {\bf  (K$^+$)} holds.  Suppose that $F$ is  reversing-correspondence on $[c_0,c_1]$ and that
\begin{equation}
F(c_0)>c_0, ~~~F(c_1)<c_1~~~ and~~~ F(c_2)=c_0.
\label{ldc}
\end{equation}
Then $F$ has a continuous iterative root $f$ of order $2$ with $H(f)=2$ such that $f$ is strictly decreasing on $[c_0,c_1]$.
	\label{LD}
\end{thm}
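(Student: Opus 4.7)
The plan is to first construct a strictly decreasing iterative root $f_1$ on the characteristic interval $[c_0,c_1]$, and then extend it to a continuous $f$ on $I$ with $f^2=F$ and $N(f)<N(F)$ (so that $H(f)=2$). Since $F|_{[c_0,c_1]}$ is strictly increasing and reversing-correspondence with $c_0,c_1\notin\mathrm{Fix}\,F$ (from $F(c_0)>c_0$ and $F(c_1)<c_1$), a classical theorem of Kuczma (see \cite{Kuczma}, cf.\ \cite{LJLZ}) yields a continuous strictly decreasing $f_1:[c_0,c_1]\to[c_0,c_1]$ with $f_1\circ f_1=F|_{[c_0,c_1]}$. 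Within the family of such roots, we select $f_1$ with $f_1(c_1)=c_0$ (which forces $f_1(c_0)=F(c_1)$ via $f_1^2=F$, and hence $f_1([c_0,c_1])=[c_0,F(c_1)]$); the reversing-correspondence is used precisely to ensure the existence of a root with this boundary behaviour.

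For the extension, the hypothesis $F(c_2)=c_0$ gives $F([c_1,c_2])=[c_0,F(c_1)]=f_1([c_0,c_1])$, so on $[c_1,c_2]$ we define $f(x):=f_1^{-1}(F(x))$; this is continuous, strictly increasing from $c_0$ to $c_1$, and satisfies $f^2=F$ there. Past $c_2$, the critical idea is to arrange $f$ so that $c_2$ is \emph{not} a fort of $f$: on $[c_2,c_3]$, we set $f:=(f|_{[c_1,c_2]})^{-1}\circ F|_{[c_2,c_3]}$, making $f$ strictly increasing across $c_2$ and mapping into $[c_1,c_2]$, so that $c_2$ is absorbed into a monotone stretch of $f$. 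This ``skipping'' of a fort of $F$ is what yields $N(f)<N(F)$. The extension is continued lap by lap, using the commutation $f\circ F=F\circ f$ together with the fact that $H(F)=1$ implies $F(I)\subseteq[c_0,c_1]$, to ensure $f^2=F$ on the whole of $I$.

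The main obstacle is twofold. First, arranging $f_1$ with the precise boundary value $f_1(c_1)=c_0$ goes beyond the bare existence of a decreasing root, so one must exploit the full strength of the reversing-correspondence, likely supported by the auxiliary lemmas of Section 3. Second, in the global extension, one has to preserve continuity across every fort of $F$ while correctly choosing the monotonicity direction of $f$ on each lap so that both $f^2=F$ and $N(f)<N(F)$ hold; a naive uniform extension $f=f_1^{-1}\circ F$ would replicate every fort of $F$ as a fort of $f$ and give only $H(f)=1$. Once the construction is complete, the verification of continuity, of $f^2=F$, of strict monotonicity on $[c_0,c_1]$, and of $H(f)=2$ reduces to routine checks, with the height following from $N(f)<N(F)=N(f^2)=N(f^3)$.
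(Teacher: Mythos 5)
Your proposal is correct and follows essentially the same route as the paper: the same decreasing root on $[c_0,c_1]$ normalized by $f(c_1)=c_0$, $f(c_0)=F(c_1)$ (obtained from Kuczma's theorem plus the paper's Lemma~3.2), the same extension $f_0^{-1}\circ F$ on $[c_1,c_2]$, and the same uniform extension $(f|_{[c_1,c_2]})^{-1}\circ F$ on all of $[c_2,c_{v+1}]$, which absorbs the fort $c_2$ exactly as you describe. The only cosmetic difference is that you certify $H(f)=2$ by counting forts via $N(f)<N(F)=N(f^2)=N(f^3)$, whereas the paper applies its Lemma~3.3 using $f(I)\not\subset[c_0,c_1]$ and $f^2(I)\subset[c_0,c_1]$; both arguments are valid.
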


According to condition \eqref{ldc}, it is easy to see that $F$ in Theorem \ref{LD} does not satisfy  {\bf  (K$^+_0$)}. We also give an example in Figure 3, which satisfies all conditions of Theorem \ref{LD}.

Similarly, when the last lap $[c_v,c_{v+1}]$ is the characteristic interval of $F$, by a change of coordinates $\varphi(x)=-x+a+b$, Theorem \ref{LD} can be applied to the map $G:=\varphi^{-1}\circ F\circ \varphi$, whose characteristic interval is the first lap.  Then, we can employ iterative roots of $G$ to give iterative roots of $F$.

\begin{figure}[H]	
	\centering
	\includegraphics[scale=0.53]{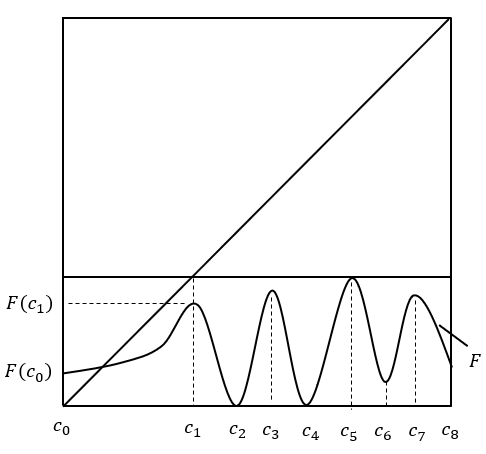}
	\caption{$F$ satisfies conditions of  Theorem \ref{LD}. }
	\label{dlf}
\end{figure}

Next, we consider the case that $K(F)=[c_k,c_{k+1}]$ for some $1\le k\le v-1$.

\begin{thm}
Let $F\in{\cal PM}(I,I)$ with $H(F)=1$ and $K(F)=[c_k,c_{k+1}]$ for some $1\le k\le v-1$ such that {\bf  (K$^+$)} holds but  {\bf  (K$^+_0$)} does not hold. Suppose that $F$ is  reversing-correspondence on $[c_k,c_{k+1}]$.
Suppose further that $F$ satisfies either
\begin{align}
c_k=F(c_{k+2})&<F(c_k)<F(c_{k+1})=F(c_{k-1})<c_{k+1},~~~\text{or}
\label{mdc}
\\
c_k<F(c_{k+2})&<F(c_k)<F(c_{k+1})<F(c_{k-1})<c_{k+1},~~~\text{or}
\label{mdc2}
\\
c_k<F(c_{k+2})&=F(c_k)<F(c_{k+1})<F(c_{k-1})=c_{k+1}.
\label{mdc3}
\end{align}
Then $F$ has a continuous iterative root $f$ of order $2$ with $H(f)=2$  such that $f$ is strictly decreasing on $[c_k,c_{k+1}]$ if the conditions \eqref{cc}-\eqref{midextenr2} in Theorem $\ref{MI}$ hold.
\label{MD}
\end{thm}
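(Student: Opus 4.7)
The plan is to construct $f$ in three stages: first on the characteristic interval $[c_k,c_{k+1}]$ via the reversing-correspondence hypothesis; then on the two adjacent laps $[c_{k-1},c_k]$ and $[c_{k+1},c_{k+2}]$ using one of (\ref{mdc})--(\ref{mdc3}); and finally on the remaining laps by propagating outward along the two fort chains $\{c_{\ell_j}\}$ and $\{c_{r_j}\}$, exactly as in the proof of Theorem \ref{MI}.

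For the first stage, because $F|_{[c_k,c_{k+1}]}$ is a reversing-correspondence, the classical construction of decreasing square roots (see \cite{Kuczma,LJLZ}) furnishes a continuous strictly decreasing $f_0:[c_k,c_{k+1}]\to[c_k,c_{k+1}]$ with $f_0\circ f_0=F|_{[c_k,c_{k+1}]}$, together with a distinguished fixed point $\xi$ of $F$ with $f_0(\xi)=\xi$. This $f_0$ will serve as $f|_{K(F)}$ and, being monotone, contributes no forts inside $K(F)$.

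For the second stage, I would use the prescribed values of $F$ at $c_{k-1},c_k,c_{k+1},c_{k+2}$ appearing in (\ref{mdc})--(\ref{mdc3}) to locate, inside $[c_k,c_{k+1}]$, two subintervals $J_{-}$ and $J_{+}$ whose images under $f_0$ coincide with $F([c_{k-1},c_k])$ and $F([c_{k+1},c_{k+2}])$ respectively. Then I would define $f$ on $[c_{k-1},c_k]$ and $[c_{k+1},c_{k+2}]$ as the unique continuous piecewise-monotone maps onto $J_{-}$ and $J_{+}$ that paste continuously onto $f_0$ at $c_k$ and $c_{k+1}$. The endpoint equalities in each of (\ref{mdc})--(\ref{mdc3}) are exactly what is needed to guarantee both the existence of $J_{\pm}$ with the correct range and the continuity of these gluings, after which a direct verification yields $f\circ f=F$ on $[c_{k-1},c_{k+2}]$.

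For the third stage, I propagate outward using (\ref{cc})--(\ref{midextenr2}) in lockstep with the construction used for Theorem \ref{MI}. On each lap $[c_{\ell_{j+1}},c_{\ell_j}]$ the extremal conditions (\ref{upexten1})--(\ref{midexten}) force $F$ to attain its extremes precisely at the values $F(c_{k-1})$ and $F(c_{k+2})$ prescribed by (\ref{jointp}); I define $f$ on the lap as a monotone piece whose image is an already-constructed subinterval of $[c_{k-1},c_{k+2}]$ on which $f$ has been defined and satisfies the correct relation, so that $f\circ f$ matches $F$ on that lap. The mirror-image argument using $\{c_{r_j}\}$, (\ref{extenck+2cr1}), (\ref{midextenr1}) and (\ref{midextenr2}) handles the right-hand side. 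Continuity at every fort is automatic from the prescribed joint values in (\ref{jointp}) and (\ref{extenck+2cr1}), the identity $f\circ f=F$ holds lap by lap by construction, and $H(f)=2$ follows from $0<N(f)<N(f^2)=N(F)$ together with the fact that no further iterate adds new forts. I expect the main obstacle to be the orientation bookkeeping in the third stage: because $f$ is decreasing on $K(F)$, the orientation of each successive outward piece flips, so I must check that the alternating-modulo-$4$ pattern in (\ref{jointp}), (\ref{upexten1})--(\ref{midexten}), (\ref{extenck+2cr1}) and (\ref{midextenr1})--(\ref{midextenr2}) matches these flips so that each newly constructed lap of $f$ lands in the previously defined region where the value of $f\circ f$ equals $F$ on that lap. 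Once this combinatorial matching is in place, the verification of continuity, the functional equation and the height condition is routine.
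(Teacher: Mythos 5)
Your three-stage skeleton coincides with the paper's (a decreasing square root on $K(F)$, then the two adjacent laps, then outward propagation along the chains $\{c_{\ell_j}\}$, $\{c_{r_j}\}$ exactly as for Theorem~\ref{MI}), but there is a genuine gap at the base of the construction. In your first stage you take an \emph{arbitrary} decreasing square root $f_0$ of $F|_{[c_k,c_{k+1}]}$ from the classical reversing-correspondence construction, and only afterwards try to locate subintervals $J_{\pm}$ with $f_0(J_-)=F([c_{k-1},c_k])$ and $f_0(J_+)=F([c_{k+1},c_{k+2}])$, asserting that the endpoint equalities in \eqref{mdc}--\eqref{mdc3} guarantee their existence. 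But \eqref{mdc}--\eqref{mdc3} are conditions on $F$, not on $f_0$: the range of $f_0$ is $[f_0(c_{k+1}),f_0(c_k)]$, and Kuczma's Theorem 15.9 leaves the boundary value $f_0(c_k)$ essentially free in $(F(c_{k+1}),c_{k+1})$, with $f_0(c_{k+1})$ correspondingly free in $(c_k,F(c_k))$. If $f_0(c_k)<F(c_{k-1})$ then $J_-$ simply does not exist. Worse, even when $f_0(c_k)>F(c_{k-1})$, so that $J_-$ exists and the forced formula $f=f_0^{-1}\circ F$ glues continuously at $c_k$, the construction breaks one lap further out: at each chain fort $c_{\ell_j}$ or $c_{r_j}$ two pieces built from \emph{different} base maps meet, with one-sided values $f_a^{-1}(F(c_{k-1}))$ and $f_b^{-1}(F(c_{k-1}))$ (or the same with $F(c_{k+2})$) for distinct monotone maps $f_a\neq f_b$, and these agree only when both equal $c_k$ or $c_{k+1}$ --- which happens exactly when $f_0(c_k)=F(c_{k-1})$ and $f_0(c_{k+1})=F(c_{k+2})$. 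For a generic root these one-sided limits differ and $f$ is discontinuous, so the conditions on $F$ alone cannot rescue an unconstrained $f_0$.

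The missing ingredient is precisely the paper's Lemma~\ref{diteendps}: a continuous decreasing square root with \emph{prescribed} boundary values $f_0(c_{k+1})=x_*$, $f_0(c_k)=y_*$ exists when $(x_*,y_*)$ lies in $\{(F(c_k),c_{k+1})\}\cup (c_k,F(c_k))\times(F(c_{k+1}),c_{k+1})\cup\{(c_k,F(c_{k+1}))\}$, and the three alternatives \eqref{mdc3}, \eqref{mdc2}, \eqref{mdc} are exactly the statements that $(F(c_{k+2}),F(c_{k-1}))$ lies in one of these three pieces --- this is why the theorem lists precisely these three cases, a structure your proposal does not explain. Once you replace ``classical root, then find $J_{\pm}$'' by this endpoint-prescribed root $f_0(c_k)=F(c_{k-1})$, $f_0(c_{k+1})=F(c_{k+2})$, the rest of your plan is sound and matches the paper, which itself defers Steps 2--5 to the proof of Theorem~\ref{MI}; your orientation worry in stage 3 is legitimate but benign (here $f_{k-1}$ and $f_{k+1}$ come out increasing, with $f_{k-1}(c_{k-1})=c_k$ and $f_{k+1}(c_{k+2})=c_{k+1}$, i.e., the roles of $c_k$ and $c_{k+1}$ as joint values swap relative to Theorem~\ref{MI}). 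Your height argument via $0<N(f)<N(f^2)$ together with $N(f^2)=N(f^3)$ (from $H(F)=1$) is a valid alternative to the paper's appeal to Lemma~\ref{height}.
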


One can check that the PM function $F$ given in Figure \ref{imf} also satisfies conditions of Theorem \ref{MD}. In fact, from Figure 2, we see that $F(c_7)>c_7$, $F(c_8)<c_8$ and $F$ has only one fixed point on $[c_7,c_8]$, implying that $F|_{[c_7,c_8]}$ is reversing-correspondence on $[c_7,c_8]$ by Definition $1$. Furthermore, note that $F(c_9)<F(c_7)<F(c_8)<F(c_6)$, that is, $F$ satisfies \eqref{mdc2}. Finally, as shown in the paragraph just after Theorem \ref{MI}, $F$ satisfies the conditions \eqref{cc}-\eqref{midextenr2}.

\section{Auxiliary lemmas}

In this section, we give some lemmas that are helpful to our proofs in section 4.

\begin{lm}
	Let $\varPhi:[\alpha,\beta]\to[\alpha,\beta]$, where $\alpha,\beta\in \mathbb{R}$, be a continuous and strictly increasing function.
	If $\varPhi(x)> x ~(resp.~ \varPhi(x)< x)$ for all $x\in [\alpha,\beta) ~(resp. ~x\in (\alpha,\beta])$, then, for any $x_*\in(\alpha,\varPhi(\alpha))~(resp.~ y_*\in(\varPhi(\beta),\beta))$, the function	$\varPhi$ has a continuous and strictly increasing iterative root $\varphi$ of order $2$ such that $\varphi(\alpha)=x_*~(resp.~ \varphi(\beta)=y_*)$.
	\label{iev}
\end{lm}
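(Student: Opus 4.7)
My plan is to prove the ``$\varPhi(x)>x$'' case by a piece-by-piece construction on a sequence of intervals that tiles $[\alpha,\beta)$, and then obtain the decreasing case by conjugating with the involution $x\mapsto\alpha+\beta-x$. Under the hypothesis $\varPhi(x)>x$ on $[\alpha,\beta)$, continuity forces $\varPhi(\beta)=\beta$ and $\varPhi$ has no interior fixed points, so the orbit $\{\varPhi^k(x)\}$ is strictly increasing and converges to $\beta$ for every $x\in[\alpha,\beta)$.

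Fix $x_*\in(\alpha,\varPhi(\alpha))$. I would set $x_0:=\alpha$, $x_1:=x_*$, and $x_{n+2}:=\varPhi(x_n)$, so that $x_{2k}=\varPhi^k(\alpha)$ and $x_{2k+1}=\varPhi^k(x_*)$. These interlace into a strictly increasing sequence with $x_n\to\beta$, and the intervals $J_n:=[x_n,x_{n+1}]$ tile $[\alpha,\beta)$. On $J_0$ I pick $\varphi|_{J_0}$ to be any continuous strictly increasing bijection onto $J_1$ with $\varphi(\alpha)=x_*$ and $\varphi(x_*)=\varPhi(\alpha)$ (possible precisely because $x_*<\varPhi(\alpha)$). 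For $n\ge 1$ I then define inductively
\[
\varphi|_{J_n}:=\varPhi\circ(\varphi|_{J_{n-1}})^{-1},
\]
which is automatically a continuous, strictly increasing bijection $J_n\to J_{n+1}$.

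The definitions match at each junction $x_n$ since both one-sided values equal $x_{n+1}=\varPhi(x_{n-1})$, so the pieces glue into a continuous, strictly increasing map on $[\alpha,\beta)$. I would then extend by $\varphi(\beta):=\beta$; continuity at $\beta$ follows from $\varphi(x_n)=x_{n+1}\to\beta$ together with monotonicity. To verify $\varphi^2=\varPhi$, for $x\in J_n$ note that $\varphi(x)\in J_{n+1}$, whence $\varphi^2(x)=\varPhi\bigl((\varphi|_{J_n})^{-1}(\varphi(x))\bigr)=\varPhi(x)$; and $\varphi^2(\beta)=\beta=\varPhi(\beta)$.

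No step of this construction is genuinely difficult; the main bookkeeping is to confirm that $x_n\to\beta$ (so that the $J_n$ leave no gap near $\beta$) and that consecutive pieces agree at each endpoint so as to produce a globally monotone function. For the decreasing case I would set $\tilde\varPhi(y):=\alpha+\beta-\varPhi(\alpha+\beta-y)$, which is continuous, strictly increasing, and satisfies $\tilde\varPhi(y)>y$ on $[\alpha,\beta)$; applying the first case with $\tilde x_*:=\alpha+\beta-y_*\in(\alpha,\tilde\varPhi(\alpha))$ yields a strictly increasing root $\tilde\varphi$ of $\tilde\varPhi$, and $\varphi(x):=\alpha+\beta-\tilde\varphi(\alpha+\beta-x)$ is then the desired strictly increasing root of $\varPhi$ with $\varphi(\beta)=y_*$.
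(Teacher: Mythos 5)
Your proposal is correct and follows essentially the same construction as the paper: the interlaced orbit sequence $x_0=\alpha$, $x_1=x_*$, $x_{n+2}=\varPhi(x_n)$, an arbitrary increasing bijection on the initial interval, the recursive extension $\varphi|_{J_n}=\varPhi\circ(\varphi|_{J_{n-1}})^{-1}$, and the closure $\varphi(\beta)=\beta$. The only difference is that where the paper dismisses the second case as ``similar,'' you make the symmetry explicit by conjugating with $x\mapsto\alpha+\beta-x$, which is a valid and slightly more complete way to handle it.
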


\begin{proof}[\bf Proof]
We only prove the case that $\varPhi(x)> x $ for all $x\in [\alpha,\beta)$ because the proof for the other case $\varPhi(x)< x $ is similar.	
	Let $x_0:=\alpha$, $x_1:=x_*$ and $\varPhi^i(x_0)=x_{2i}$, $\varPhi^i(x_1)=x_{2i+1}$ for $i\ge 1$. Since $\varPhi$ is strictly increasing and $\varPhi(x)>x$	for all $x\in[\alpha,\beta)$, the sequence $\{x_i\}_{i\ge0}$ is also strictly increasing. Note that $\varPhi$ is continuous, it follows that $\varPhi(\beta)=\beta$, implying $x_i\to \beta$ as $i\to +\infty$. Fix a strictly increasing and continuous bijection $\varphi_0:[x_0,x_1] \to [x_1,x_2]$. Then $\varphi_0$ can be extended to a strictly increasing and continuous  iterative root $\varphi$ of order $2$ of $\varPhi$ on $[\alpha,\beta]$ uniquely. In fact, the iterative root $\varphi:[\alpha,\beta]\to [\alpha,\beta]$ of order $2$ can be defined as
	\begin{equation}
	\varphi(x):=\begin{cases}
	\varphi_i(x),~~~&x\in [x_i,x_{i+1}],~i\ge0,\\
	\beta,~~~&x=\beta,
	\end{cases}
	\end{equation}
	where $\varphi_i: [x_i,x_{i+1}]\to [x_{i+1},x_{i+2}]$ is defined recursively by
	$$
	\varphi_i(x):=\varPhi\circ \varphi_{i-1}^{-1}(x) ~~~\text{for}~~~x\in[x_i,x_{i+1}],~i\ge1.
	$$
	The proof is completed.
\end{proof}

\begin{lm}
	Let $\varPhi:[\alpha,\beta]\cup[\eta,\xi] \to [\alpha,\beta]\cup[\eta,\xi]$ be a continuous and strictly increasing function, where $\alpha, \beta, \eta, \xi\in \mathbb{R}$ and $\beta\le\eta$. Suppose that $\varPhi(\alpha)>\alpha,\varPhi(\beta)=\beta,\varPhi(\eta)=\eta$ and $\varPhi(\xi)<\xi$. Then, for any $(x_*,y_*)\in \{ (\varPhi(\alpha),\xi)\}\cup (\alpha,\varPhi(\alpha)) \times (\varPhi(\xi),\xi) \cup \{(\alpha,\varPhi(\xi)) \}$, the function $\varPhi$ has a continuous and strictly decreasing iterative root $\varphi$ of order $2$ such that $\varphi(\alpha)=y_*$, $\varphi(\beta)=\eta$ and $\varphi(\xi) =x_*$.
	\label{diteendps}
\end{lm}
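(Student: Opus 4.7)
I would prove Lemma~\ref{diteendps} by treating the generic case $(x_*,y_*)\in(\alpha,\varPhi(\alpha))\times(\varPhi(\xi),\xi)$ in detail; the two extremal pairs $(\varPhi(\alpha),\xi)$ and $(\alpha,\varPhi(\xi))$ are limiting cases of the same construction. Set $a_i:=\varPhi^i(\alpha)$, $p_i:=\varPhi^i(x_*)$, $b_i:=\varPhi^i(\xi)$, $q_i:=\varPhi^i(y_*)$ for $i\ge 0$. The hypotheses together with strict monotonicity of $\varPhi$ yield
\[
\alpha=a_0<p_0<a_1<p_1<\cdots\to\beta,\qquad \xi=b_0>q_0>b_1>q_1>\cdots\to\eta,
\]
with $b_1\in(q_1,q_0)$. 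Since $\varphi^2=\varPhi$ entails $\varphi\circ\varPhi=\varPhi\circ\varphi$, the demanded boundary values $\varphi(\alpha)=y_*$, $\varphi(\xi)=x_*$, $\varphi(\beta)=\eta$, $\varphi(\eta)=\beta$ propagate to $\varphi(a_i)=q_i$, $\varphi(p_i)=b_{i+1}$, $\varphi(b_i)=p_i$, $\varphi(q_i)=a_{i+1}$ for every $i\ge 0$.

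The construction has essentially one free choice: pick any continuous strictly decreasing bijection $\varphi_0\colon[\alpha,a_1]\to[q_1,q_0]$ with $\varphi_0(\alpha)=q_0$, $\varphi_0(p_0)=b_1$, $\varphi_0(a_1)=q_1$, which is possible because $p_0\in(\alpha,a_1)$ and $b_1\in(q_1,q_0)$. Extend to $[\alpha,\beta]$ $\varPhi$-equivariantly by setting $\varphi(x):=\varPhi^i(\varphi_0(\varPhi^{-i}(x)))$ for $x\in[a_i,a_{i+1}]$ and $\varphi(\beta):=\eta$; the resulting map is a continuous strictly decreasing bijection $[\alpha,\beta]\to[\eta,q_0]$, with continuity at $\beta$ coming from $a_i\uparrow\beta$ and $q_i\downarrow\eta$. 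Then define
\[
\varphi(y):=\begin{cases}\varPhi(\varphi^{-1}(y)), & y\in[\eta,q_0],\\ \varphi_0^{-1}(\varPhi(y)), & y\in[q_0,\xi],\end{cases}
\]
where $\varphi^{-1}$ denotes the inverse of the map just constructed on $[\alpha,\beta]$. The two formulas agree at $q_0$ (both yield $a_1$), so $\varphi|_{[\eta,\xi]}$ is a continuous strictly decreasing bijection $[\eta,\xi]\to[x_*,\beta]$.

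I would then verify $\varphi^2=\varPhi$ piecewise. On $[q_0,\xi]$, directly, $\varphi^2(y)=\varphi_0(\varphi_0^{-1}(\varPhi(y)))=\varPhi(y)$. On $[\eta,q_0]$, the image $\varphi(y)$ lies in $[a_1,\beta]\subset[\alpha,\beta]$, and $\varPhi$-equivariance gives $\varphi^2(y)=\varPhi(\varphi(\varphi^{-1}(y)))=\varPhi(y)$. On $[\alpha,\beta]$, for $x\in[a_i,a_{i+1}]$ the image $\varphi(x)$ lies in $[q_{i+1},q_i]\subset[\eta,q_0]$, so the second application of $\varphi$ is governed by the first branch of the piecewise definition, yielding $\varphi^2(x)=\varPhi(\varphi^{-1}(\varphi(x)))=\varPhi(x)$. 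For $(x_*,y_*)=(\varPhi(\alpha),\xi)$ the interval $[q_0,\xi]$ collapses to a point and the second branch is superfluous; for $(\alpha,\varPhi(\xi))$ the interval $[\alpha,p_0]$ collapses and the interior constraint $\varphi_0(p_0)=b_1$ merges with $\varphi_0(\alpha)=q_0$.

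The main bookkeeping obstacle is checking continuity and strict monotonicity at three kinds of seams: the interior $\varPhi$-orbit junctions $a_i$ (matched by $\varphi(a_{i+1})=q_{i+1}$ from either adjacent piece), the limits $\beta$ and $\eta$ (handled by $a_i\uparrow\beta$, $q_i\downarrow\eta$), and the seam at $q_0$ where the two formulas on $[\eta,\xi]$ meet. Everything else reduces to noting that a composition of a decreasing map with an increasing map is decreasing, together with careful tracking of which interval each formula lands in.
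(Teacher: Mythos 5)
Your proposal is correct and follows essentially the same route as the paper's proof: you fix a strictly decreasing seed $\varphi_0$ on the fundamental domain $[\alpha,\varPhi(\alpha)]$ with the identical three interpolation conditions $\varphi_0(\alpha)=y_*$, $\varphi_0(x_*)=\varPhi(\xi)$, $\varphi_0(\varPhi(\alpha))=\varPhi(y_*)$, extend it $\varPhi$-equivariantly to $\widehat{\varphi}$ on $[\alpha,\beta]$, and define the root on $[\eta,\xi]$ via the inverse, your two-branch formula there coinciding with the paper's single expression $\widehat{\varphi}^{-1}\circ\varPhi$ because of the commutation relation $\varPhi\circ\widehat{\varphi}^{-1}=\widehat{\varphi}^{-1}\circ\varPhi$. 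The only cosmetic differences are that you treat the two extremal pairs as degenerate limits of the same construction (the paper just calls them ``similar'') and that you share the paper's implicit assumption that the orbits $\varPhi^i(\alpha)$ and $\varPhi^i(\xi)$ converge to $\beta$ and $\eta$, respectively.
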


\begin{proof}[\bf Proof.]
	We only give a proof for $(x_*,y_*)\in(\alpha,\varPhi(\alpha)) \times (\varPhi(\xi),\xi)$ because the discussion for the remaining two cases is similar.
	Fix a continuous and strictly decreasing function $\varphi_0:[\alpha,\varPhi(\alpha)]\to[\varPhi(y_*),y_*]$ such that
	\begin{equation}
	\varphi_0(\alpha)=y_*,~~~\varphi_0(\varPhi(\alpha))=\varPhi(y_*) ,~~~\varphi_0(x_*)=\varPhi(\xi).
	\label{endcondi}
	\end{equation}
	Clearly, $\varphi_0$ is well defined because $x_*\in (\alpha,\varPhi(\alpha))$ and $\varPhi(\xi)\in (\varPhi(y_*),y_*)$.
	With the aid of [\cite{KCG}, Theorem 5.3.1], the function $\varphi_0$ can be extended to a continuous and strictly decreasing solution $\widehat{\varphi}:[\alpha,\beta]\to[\eta,y_*]$ of the equation
	\begin{equation}
	\widehat{\varphi}(\varPhi(x))=\varPhi(\widehat{\varphi}(x)),~~~x\in [\alpha,\beta],
	\label{exchaequa}
	\end{equation}
	satisfying $\widehat{\varphi}(\beta)=\eta$. In order to make the proof more readable, we give a detail of the extension as follows. For every $x\in[\varPhi^i(\alpha),\varPhi^{i+1}(\alpha)]$, $i\ge1$, define
	\begin{equation}
	\varphi_i(x):=\varPhi^i\circ \varphi_0\circ \varPhi^{-i}(x).
	\end{equation}
	Clearly, $\varphi_i$'s are continuous and strictly decreasing for all $i\ge1$. By \eqref{endcondi}, one can check that
	$\varphi_i(\varPhi^i(\alpha))=\varPhi^i(y_*)$ and $\varphi_i(\varPhi^{i+1}(\alpha))=\varPhi^{i+1}(y_*)$ for all $i\ge1$. Thus, the function
	\begin{equation}
	\widehat{\varphi}(x):=
	\begin{cases}
	\varphi_i(x),~~~& x\in [\varPhi^i(\alpha),\varPhi^{i+1}(\alpha)],~i\ge1,\\
	\eta,~~~&x=\beta,
	\end{cases}
	\label{hatf}
	\end{equation}
	is well defined and continuous on $[\alpha,\beta]$. Note that for every $x\in[\alpha,\beta)$, there exists an integer $i$ such that $x\in[\varPhi^i(\alpha),\varPhi^{i+1}(\alpha)]$ and then
	\begin{equation*}
	\widehat{\varphi}(\varPhi(x)) = \varphi_{i+1}(\varPhi(x))=\varPhi^{i+1}\circ \varphi_0\circ \varPhi^{-i-1}(\varPhi(x))=\varPhi(\varPhi^i\circ \varphi_0\circ \varPhi^{-i}(x))=\varPhi(\widehat{\varphi}(x)).
	\end{equation*}
	Moreover, $\widehat{\varphi}(\varPhi(\beta))=\widehat{\varphi}(\beta)=\eta=\varPhi(\widehat{\varphi}(\beta))$. This proves that $\widehat{\varphi}$ defined by \eqref{hatf} is a continuous and strictly decreasing solution of equation \eqref{exchaequa}.
	
	Now, we define a function as desired in this Lemma. Let
	\begin{equation}
	\varphi(x):=\begin{cases}
	\widehat{\varphi}(x),~~~&x\in [\alpha,\beta],\\
	\widehat{\varphi}^{-1}\circ \varPhi(x),~~~&x\in [\eta,\xi].
	\end{cases}
	\label{df}
	\end{equation}
	Clearly, $\widehat{\varphi}^{-1}\circ \varPhi$ is continuous and strictly decreasing from $[\eta,\xi]$ to $[x_*,\beta]$. For every $x\in[\alpha,\beta]$, by \eqref{exchaequa} and \eqref{df} we have $\varphi^2(x)=\widehat{\varphi}^{-1}\circ \varPhi\circ \widehat{\varphi}(x) =\widehat{\varphi}^{-1}\circ\widehat{\varphi}\circ \varPhi(x)=\varPhi(x)$. For every $x\in[\eta,\xi]$, by \eqref{df} we have $\varphi^2(x)=\widehat{\varphi}\circ\widehat{\varphi}^{-1}\circ \varPhi(x)=\varPhi(x)$. It is shown that the function $\varphi$ defined in \eqref{df} is a continuous and strictly decreasing iterative root of $\varPhi$ of order $2$. From \eqref{df}, \eqref{hatf} and the first equality of \eqref{endcondi}, we have $\varphi(\alpha)=y_*$ and $\varphi(\beta)=\eta$. Moreover, by the third equality of \eqref{endcondi}, we obtain
	$$
	\varphi(\xi)=\widehat{\varphi}^{-1}\circ \varPhi(\xi)=\varphi_0^{-1}\circ \varPhi(\xi)=x_*.
	$$
	The proof is completed.
\end{proof}

\begin{lm}[Lemma 3.1 in \cite{LC}]
	Let $F\in{\cal PM}(I,I)$ and $H(F)=m\in (0,+\infty)$. Then $m$ is the smallest nonnegative integer such that $F^m(I) \subset K(F^m)$, where $K(F^m)$ denotes the characteristic interval of $F^m$.
	\label{height}
\end{lm}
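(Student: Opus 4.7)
My plan is to reduce the statement to the paper's own characterization recalled just before Theorem~A: for a PM map $G$, the characteristic interval $K(G)$ exists if and only if $H(G)=1$, and in that case $K(G)$ is by definition a lap covering $G(I)$. Specializing $G=F^m$, the assertion $F^m(I)\subset K(F^m)$ is equivalent to $H(F^m)=1$; so it suffices to prove (i) $H(F^m)=1$, and (ii) $H(F^k)\neq 1$ for every $0\le k<m$.

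For (i), I would exploit the stabilization property of $\{N(F^j)\}_{j\ge 0}$ recalled in the introduction: once this nondecreasing sequence repeats it remains constant. From $H(F)=m$ one has $N(F^m)=N(F^{m+1})$, whence $N(F^j)=N(F^m)$ for every $j\ge m$. In particular
\[
N\bigl((F^m)^2\bigr)=N(F^{2m})=N(F^m),
\]
which (since $H(G)\le 1$ is equivalent to $N(G)=N(G^2)$) gives $H(F^m)\le 1$. On the other hand $m\ge 1$ and $N(F^m)\ge N(F)\ge 1$, so $F^m$ is not monotone and $H(F^m)\ge 1$. Hence $H(F^m)=1$, so $K(F^m)$ is defined and contains $F^m(I)$ by definition.

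For (ii), assume $1\le k<m$. Minimality of $m$ in the definition of height forces $N(F^k)<N(F^{k+1})$; combined with the monotonicity of the sequence and $k+1\le 2k$, this gives
\[
N\bigl((F^k)^2\bigr)=N(F^{2k})\ge N(F^{k+1})>N(F^k),
\]
so $H(F^k)\ge 2$ and $K(F^k)$ does not exist. The case $k=0$ is immediate since $F^0=\mathrm{id}$ is monotone and has no characteristic interval.

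I do not anticipate a serious obstacle: the whole argument is bookkeeping with $\{N(F^j)\}$ and the paper's characterization of $K(\cdot)$ via $H=1$. The only delicate point is the implicit requirement that $K(F^m)$ be defined before the inclusion $F^m(I)\subset K(F^m)$ is even meaningful; fortunately, this is exactly the content of step~(i).
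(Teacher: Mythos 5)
Your proof is correct, and it is worth noting that the paper itself offers no internal proof to compare against: the lemma is quoted verbatim as Lemma 3.1 of \cite{LC} and used as a black box. What you have produced is a self-contained derivation from exactly the two bookkeeping facts the paper recalls in its introduction (the sequence $\{N(F^i)\}_{i\ge 0}$ is nondecreasing, and once it repeats it is constant thereafter) together with the characterization that a lap covering the range --- the characteristic interval $K(G)$ --- exists if and only if $H(G)=1$. Checking your steps: in (i), $N(F^m)=N(F^{m+1})$ propagates to $N(F^{2m})=N(F^m)$, i.e.\ $N\bigl((F^m)^2\bigr)=N(F^m)$, so $H(F^m)\le 1$; and since $H(F)=m\ge 1$ forces $N(F)\ge 1$, monotonicity gives $N(F^m)\ge 1$, ruling out $H(F^m)=0$, whence $H(F^m)=1$ and the inclusion $F^m(I)\subset K(F^m)$ holds by the very definition of $K(F^m)$. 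In (ii), for $1\le k<m$ minimality of the height gives $N(F^k)<N(F^{k+1})\le N(F^{2k})=N\bigl((F^k)^2\bigr)$, using $k+1\le 2k$, so $H(F^k)\ge 2$ (the case $H(F^k)=0$ is excluded since $N(F^k)\ge N(F)\ge 1$) and no characteristic interval exists. The one point you should make fully explicit is the convention at $k=0$: under the literal definition of a lap, $I$ itself is a lap of $F^0=\mathrm{id}$ that trivially covers its range, so the lemma is true only under the stipulation --- adopted implicitly by the paper's ``$K(G)$ exists iff $H(G)=1$'' and by you --- that $K(G)$ is undefined when $H(G)\ne 1$; with that convention your dismissal of $k=0$ is fine, and the argument is complete.
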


\section{Proofs of Theorems}
In this section, we prove Theorems~\ref{LI}-\ref{MD}, as given in section $2$.
\begin{proof}[\bf Proof of Theorem \ref{LI}]
{\it Step 1}: Construct a continuous and strictly increasing  iterative root of order $2$ of $F$ on $[c_0,c_1]$.

Under condition  \eqref{lf01}, \eqref{lf02} or \eqref{lf04}, by \cite[Theorem $15.7$]{Kuczma} and Lemma \ref{iev}, we can construct
	a continuous and strictly increasing function $f_0:[c_0,c_1]\to[c_0,c_1]$ satisfying
	\begin{equation}
	f_0(c_0)=F(c_2),~~~~~ f_0(c_1)=F(c_3)
	\label{extencon}
	\end{equation}
	such that $f_0^2(x)=F(x)$ for all $x\in [c_0,c_1]$.

{\it Step 2}: Define functions $f_i$ on each $[c_i,c_{i+1}]$ for $1\le i\le v$.

For $i=1,2,$ define
$f_i:[c_i,c_{i+1}]\to [c_0,c_1]$ by
	\begin{equation}
	f_i(x):=f_0^{-1}\circ F(x),~~~~~x\in[c_i,c_{i+1}].
	\label{f12}
	\end{equation}
	By \eqref{extencon}, one can check that $F([c_i,c_{i+1}])(i=1,2)$ are both included in the range of $f_0$ because $F(c_3)>F(c_1)$, implying that $f_i (i=1,2)$ are well defined.
	Clearly, $f_1$ and $f_2$ are both continuous and strictly monotone on their domains. Moreover,
	by \eqref{extencon} we have
	\begin{align}
	&f_1(c_1)=f_0^{-1}\circ F(c_1)=f_0^{-1}\circ f_0^2(c_1)=f_0(c_1),
	\label{f1c1}
	\\
	&f_1(c_2) =f_2(c_2)= f_0^{-1}\circ F(c_2)= c_0,
	\label{f1c2}
	\\
	&f_2(c_3)=f_0^{-1}\circ F(c_3)=c_1.
	\label{f2c3}
	\end{align}
	For $3\le i\le u-1$, define $f_i:[c_i,c_{i+1}]\to [c_1,c_2]$ by
	\begin{equation}
	f_i(x):=f_1^{-1}\circ F(x),~~~~~x\in[c_i,c_{i+1}].
	\label{f1f}
	\end{equation}
	Since $F(c_3)=\max_{x\in[c_0,c_u]} F(x)$, we have $$F([c_i,c_{i+1}])\subseteq [c_0,F(c_3)]=[f_1(c_2),f_1(c_1)]
	$$ for each $i=3,\cdots,u-1$ by \eqref{extencon}, \eqref{f1c1} and \eqref{f1c2}, that is, $F([c_i,c_{i+1}])$ is included in the range of $f_1$ for $i=3,\cdots,u-1$. Thus, $f_i$ is well defined for each $i=3,\cdots,u-1$.
	For $u\le i\le n$,
	define $f_i:[c_i,c_{i+1}]\to [c_2,c_3]$ by
	\begin{equation}
	f_i(x):=f_2^{-1}\circ F(x),~~~~~x\in[c_i,c_{i+1}].
	\label{f2f}
	\end{equation}
Since $H(F)=1$ and $K(F)=[c_0,c_1]$, we get $F(I)\subseteq [c_0,c_1]$. In particular, $F([c_i,c_{i+1}])\subset[c_0,c_1]$ for each $u\le i\le v$.	By \eqref{f1c2} and \eqref{f2c3}, we see that the range of $f_2$ is $[c_0,c_1]$. Hence, $F([c_i,c_{i+1}])$ is included in range of $f_2$. This implies that $f_i$ in \eqref{f2f} is well defined for each $u\le i\le v$.
	
{\it Step 3}: Joint $f_i ~(0\le i\le n)$ to get a continuous iterative root of order $2$ as desired in Theorem \ref{LI}.

Let $f: I \to I$ be defined by $f(x):=f_i(x)$ on each $[c_i,c_{i+1}]$ for $i=0,\cdots,v$.
	We claim that $f$ is a continuous  iterative root of order $2$ of $F$ such that $f$ is strictly increasing on $[c_0,c_1]$ and $H(f)=2$. In fact, it is easy to see that $f_i$ is continuous on its domain for each $i=0,\cdots,v$.
	By \eqref{extencon}, \eqref{f1c1}, \eqref{f2c3}, \eqref{f1f} and \eqref{f2f}, we have
	\begin{align*}
	&f_3(c_3)=f_1^{-1}\circ F(c_3)=f_1^{-1}\circ f_0(c_1)=f_1^{-1}\circ f_1(c_1)=c_1=f_2(c_3),
	\\
	&f_i(c_{i+1})=f_1^{-1}\circ F(c_{i+1})=f_{i+1}(c_{i+1})~~ \text{for}~~3\le i\le u-2,
	\\
	&f_{u-1}(c_u) =f_1^{-1}\circ F(c_u)=f_1^{-1}(c_0)=c_2=f_2^{-1}(c_0)=f_2^{-1}\circ F(c_u)=f_u(c_u),
	\\
	&f_i(c_{i+1})=f_2^{-1}\circ F(c_{i+1})=f_{i+1}(c_{i+1})~~ \text{for}~~u\le i\le v.
	\end{align*}
Together with \eqref{f1c1} and \eqref{f1c2}, it follows that $f$ is continuous on $I$. On the other hand, $f^2(x)=F(x)$ for $x\in[c_0,c_1]$ since $f(x)=f_0(x)$.
	By \eqref{f12}, we have
	\begin{equation*}
	f^2(x) = f\circ f_0^{-1}\circ F(x)=f_0\circ f_0^{-1}\circ F(x)=F(x) ~~\text{for}~~x\in[c_1,c_3].
	\end{equation*}
	According to \eqref{f1f}, we get
	\begin{equation*}
	f^2(x) = f\circ f_1^{-1}\circ F(x)=f_1\circ f_1^{-1}\circ F(x)=F(x) ~~\text{for}~~x\in[c_3,c_u].
	\end{equation*}
	By \eqref{f2f}, we further obtain
	\begin{equation*}
	f^2(x) = f\circ f_2^{-1}\circ F(x)=f_2\circ f_2^{-1}\circ F(x)=F(x) ~~\text{for}~~x\in[c_u,c_{v+1}].
	\end{equation*}
	The above discussion implies that $f$ is an iterative root of order $2$ of $F$. Moreover, $f=f_0$ on $[c_0,c_1]$ is strictly increasing. Finally, noting  that $f([c_i,c_{i+1}])\subset [c_0,c_1]$ for $i=1,2$, $f([c_i,c_{i+1}])\subset [c_1,c_2]$ for $i=3,\cdots,u-1$ and $f([c_i,c_{i+1}])\subset [c_2,c_3]$ for $i=u,\cdots,v$, we have $f(I)\not\subset[c_0,c_1]$ but $f^2(I)\subset[c_0,c_1]$. This gives that $H(f)=2$ by Lemma \ref{height}. We complete the proof of Theorem~\ref{LI}.
\end{proof}

\begin{proof}[\bf Proof of Theorem \ref{MI}]

	{\it Step 1}: Construct a continuous and strictly increasing iterative root of $F$ of order $2$ on $[c_k,c_{k+1}]$.

Under condition \eqref{f_01}, \eqref{f_02} or \eqref{f_03}, using \cite[Theorem $15.7$]{Kuczma} and Lemma \ref{iev} again, we can construct a continuous strictly increasing function $f_k:[c_k,c_{k+1}]\to [c_k,c_{k+1}]$ such that
	\begin{equation}
	f_k(c_k)=F(c_{k+2}),~~~~~ f_k(c_{k+1})=F(c_{k-1}),
	\label{extenmid}
	\end{equation}
	and $f_k^2(x)=F(x)$ for all $x\in[c_k,c_{k+1}]$.

	{\it Step 2}: Define functions on intervals $[c_{k-1},c_k]$ and $[c_{k+1},c_{k+2}]$, respectively.

	For $i=k-1,k+1$, we define $f_i:[c_i,c_{i+1}]\to [c_k,c_{k+1}]$ by
	\begin{equation}
	f_i(x):=f_k^{-1}\circ F(x),~~~~~~x\in [c_i,c_{i+1}].
	\label{firexten}
	\end{equation}
	Note that $F(c_k)=F(c_{k+2})\ge f_k(c_k)$ and $F(c_{k+1})=F(c_{k-1})\le f_k(c_{k+1})$. Thus, by \eqref{extenmid}, $F([c_i,c_{i+1}])$ ($i=k-1,k+1$) are both included in the range of $f_k$ since $F$ is monotone on $[c_i,c_{i+1}]$ for $i=k-1,k+1$. This implies that $f_i~(i=k-1,k+1)$ in \eqref{firexten} are well defined.
	Since $f_k$ is strictly increasing and $F|_{[c_i,c_{i+1}]}$ ($i=k-1,k+1$) are  strictly decreasing, we get that $f_i$ ($i=k-1,k+1$) are both strictly decreasing. Clearly, $f_i$ ($i=k-1,k+1$) are continuous. Moreover, by \eqref{extenmid} and \eqref{firexten}, we have
	\begin{align}
	&f_{k-1}(c_{k-1})=f_k^{-1}\circ F(c_{k-1})=c_{k+1},
	\label{fk-1ck-1}
	\\
	&f_{k-1}(c_k)=f_k^{-1}\circ F(c_k)=f_k^{-1}\circ f_k^2(c_k)=f_k(c_k),
	\label{fk-1ck}
	\\
	&f_{k+1}(c_{k+1})=f_k^{-1}\circ F(c_{k+1})=f_k^{-1}\circ f_k^2(c_{k+1})=f_k(c_{k+1}),
	\label{fk+1l}
	\\
	&f_{k+1}(c_{k+2})=f_k^{-1}\circ F(c_{k+2})=c_k.
	\label{fk+1ck+2}
	\end{align}

	{\it Step 3}: Define a function on interval $[c_0,c_{k-1}]$.

	For $\ell_{j+1}\le i\le \ell_j-1$ with
	$j\equiv 0(\!\!\mod 4)$, we define
	$f_i:[c_i,c_{i+1}]\to[c_{k+1},c_{k+2}]$ as
	\begin{equation}
	f_i(x):=f_{k+1}^{-1}\circ F(x),~~~~~ x\in [c_i,c_{i+1}].
	\label{fcl1}
	\end{equation}
	By \eqref{upexten1}, \eqref{fk+1l}, \eqref{fk+1ck+2} and \eqref{extenmid}, we have $F([c_{\ell_{j+1}},c_{\ell_j}])\subset[c_k,F(c_{k-1})]=[c_k,f_k(c_{k+1})]=[c_k,f_{k+1}(c_{k+1})]$, that is, $F([c_{\ell_{j+1}},c_{\ell_j}])$ is included in the range of $f_{k+1}$, implying $f_i$ in \eqref{fcl1} is well defined.

	For $\ell_{j+1}\le i\le \ell_j-1$ with
	$j\equiv 1~\text{or}~3(\!\!\mod 4)$, we define
	$f_i:[c_i,c_{i+1}]\to[c_k,c_{k+1}]$ as
	\begin{equation}
	f_i(x):=f_k^{-1}\circ F(x),~~~~~ x\in [c_i,c_{i+1}].
	\label{ficlj13}
	\end{equation}
	By \eqref{midexten} and \eqref{extenmid}, we see that $F([c_{\ell_{j+1}},c_{\ell_j}])$ is included in the range of $f_k$. This means that each $f_i$ of \eqref{ficlj13} is well defined.

	For $\ell_{j+1}\le i\le \ell_j-1$ with
	$j\equiv 2(\!\!\mod 4)$,
	we define
	$f_i:[c_i,c_{i+1}]\to[c_{k-1},c_k]$ as
	\begin{equation}
	f_i(x):=f_{k-1}^{-1}\circ F(x),~~~~~ x\in [c_i,c_{i+1}].
	\label{ficlj2}
	\end{equation}
	One can check that each $f_i$ of \eqref{ficlj2} is well defined by \eqref{midexten}, \eqref{fk-1ck-1}, \eqref{fk-1ck} and \eqref{extenmid}.

	{\it Step 4}: Define a function on interval $[c_{k+2},c_{v+1}]$.
	
	For $r_j \le i\le r_{j+1}-1$ with
	$j\equiv 0(\!\!\mod 4)$, we define $f_i:[c_i,c_{i+1}]\to[c_{k-1},c_k]$ as in
	\eqref{ficlj2}.
 For $r_j \le i\le r_{j+1}-1$ with
 $j\equiv 1~\text{or}~3(\!\!\mod 4)$, define
 $f_i:[c_i,c_{i+1}]\to[c_k,c_{k+1}]$ as in
 \eqref{ficlj13}.
 For $r_j \le i\le r_{j+1}-1$ with
 	$j\equiv 2(\!\!\mod 4)$, define $f_i:[c_i,c_{i+1}]\to[c_{k+1},c_{k+2}]$ as in
 	\eqref{fcl1}.		
	By \eqref{midextenr1}, \eqref{midextenr2}, \eqref{fk-1ck-1}-\eqref{fk+1ck+2} and  \eqref{extenmid}, it is easy to verify that $f_i$ is well defined for each $k+2\le i\le v$.

	{\it Step 5}: Define a continuous  iterative root of order $2$ by jointing $f_i~(0\le i\le v)$.

	Let $f: I\to I$ be identical with $f_i$ on each $[c_i,c_{i+1}]$ for $0\le i\le v$.
	We claim that the function $f$ is a continuous iterative root of order $2$ of $F$ such that $f$ is strictly increasing on $[c_k,c_{k+1}]$ and $H(f)=2$. In fact, $f_i$ is continuous on its domain for every $0\le i\le v$. From \eqref{fk-1ck} and \eqref{fk+1l} we get that $f$ is continuous at $c_k$ and $c_{k+1}$. Thus, in order to prove the continuity of $f$ on $I$, it suffices to show  that $f$ is continuous at points $\{c_i\}_{i=1}^{n}\setminus\{c_k,c_{k+1}\}$. For each $\ell_{j+1}\le i\le \ell_j-2$, $0\le j\le s$, and each  $r_j\le i\le r_{j+1}-1$, $0\le j\le t$, we get
	\begin{equation*}
	f_i(c_{i+1})=f_{i+1}(c_{i+1})
	\end{equation*}
	 because $f_i$ and $f_{i+1}$ have the same expression, which is one of \eqref{fcl1}, \eqref{ficlj13} and \eqref{ficlj2}. It follows that $f$ is continuous at points
	 \begin{equation*}
\cup_{0\le j\le s} \{c_i\}_{i=\ell_{j+1}+1}^{\ell_j-1}~~~\text{and}~~~\cup_{0\le j\le t} \{c_i\}_{i=r_j+1}^{r_{j+1}-1}.
	 \end{equation*}
In order to show the continuity of $f$, it remains to prove that $f$ is continuous at points $\{c_{\ell_j}\}_{j=0}^s$	 and $\{c_{r_j}\}_{j=0}^t$.
	Note that $f_{k+1}(c_{k+1})=f_k(c_{k+1})=F(c_{k-1})$ by \eqref{fk+1l} and \eqref{extenmid}, implying that
	\begin{equation}
	f_{k+1}^{-1}\circ F(c_{k-1})=f_k^{-1}\circ F(c_{k-1})=c_{k+1},
	\label{ck+1}
	\end{equation}
	and $f_{k-1}(c_k)=f_k(c_k)=F(c_{k+2})$ by \eqref{fk-1ck} and  \eqref{extenmid}, implying that
	\begin{equation}
	f_{k-1}^{-1}\circ F(c_{k+2})=f_k^{-1}\circ F(c_{k+2})=c_k.
	\label{ck}
	\end{equation}
Thus, for each $0\le j\le s$ with $j\equiv0(\mod4)$, we get
\begin{align}
f_{\ell_j-1}(c_{\ell_j})&=f_{k+1}^{-1}\circ F(c_{\ell_j})=f_{k+1}^{-1}\circ F(c_{k-1})=c_{k+1},
\label{conti0l}
\\
f_{\ell_j}(c_{\ell_j})&=\begin{cases}
f_{k-1}(c_{k-1})=c_{k+1}~~~& \text{for}~ j=0,\\
f_k^{-1}\circ F(c_{\ell_j}) = f_k^{-1}\circ F(c_{k-1})=c_{k+1}& \text{for}~ j>0.
\end{cases}
\label{conti0r}
\end{align}
Actually, \eqref{conti0l} is obtained by \eqref{fcl1}, \eqref{jointp} and \eqref{ck+1}, and \eqref{conti0r} is obtained by \eqref{fk-1ck-1}, \eqref{ficlj13}, \eqref{jointp} and \eqref{ck}.
This implies that $f$ is continuous at $c_{\ell_j}$, where $j\equiv0(\!\!\mod4)$.
For $0\le j\le s$ with $j\equiv1(\!\!\mod4)$, we have
\begin{align*}
f_{\ell_j-1}(c_{\ell_j})=f_k^{-1}\circ F(c_{\ell_j}) = f_k^{-1}\circ F(c_{k-1})=c_{k+1}~~&\text{by}~~\eqref{ficlj13}, \eqref{jointp}, \eqref{ck+1},
\\ f_{\ell_j}(c_{\ell_j})=f_{k+1}^{-1}\circ F(c_{\ell_j})= f_{k+1}^{-1}\circ F(c_{k-1})=c_{k+1}~~&\text{by}~~\eqref{fcl1}, \eqref{jointp}, \eqref{ck+1}.
\end{align*}
This shows that $f$ is continuous at $c_{\ell_j}$, where $j\equiv1(\!\!\mod4)$.
For $0\le j\le s$ with $j\equiv2(\!\!\mod4)$, we have
\begin{align*}
f_{\ell_j-1}(c_{\ell_j})=f_{k-1}^{-1}\circ F(c_{\ell_j})=f_{k-1}^{-1}\circ F(c_{k+2})=c_k~~&\text{by}~~\eqref{ficlj2}, \eqref{jointp}, \eqref{ck},
\\
f_{\ell_j}(c_{\ell_j})=f_k^{-1}\circ F(c_{\ell_j})= f_k^{-1}\circ F(c_{k+2})=c_k~~&\text{by}~~\eqref{ficlj13}, \eqref{jointp}, \eqref{ck}.
\end{align*}
This shows that $f$ is continuous at $c_{\ell_j}$, where $j\equiv2(\!\!\mod4)$.
For $0\le j\le s$ with $j\equiv3(\!\!\mod4)$, we get
\begin{align*}
f_{\ell_j-1}(c_{\ell_j})=f_k^{-1}\circ F(c_{\ell_j})= f_k^{-1}\circ F(c_{k+2})=c_k~~&\text{by}~~\eqref{ficlj13}, \eqref{jointp}, \eqref{ck},
\\
f_{\ell_j}(c_{\ell_j})=f_{k-1}^{-1}\circ F(c_{\ell_j})= f_{k-1}^{-1}\circ F(c_{k+2})=c_k~~&\text{by}~~ \eqref{ficlj2}, \eqref{jointp}, \eqref{ck}.
\end{align*}
This shows that $f$ is continuous at $c_{\ell_j}$, where $j\equiv3(\!\!\mod4)$.
Similarly, by \eqref{extenck+2cr1}, \eqref{fcl1}-\eqref{ficlj2}, \eqref{ck+1} and \eqref{ck}, we can prove that $f$ is continuous at $c_{r_j}$ for all $0\le j\le t$.
Thus, $f$ is continuous on $I$.

Now, we show that $f$ is an iterative roots of order $2$ of $F$. For every $x\in [c_k,c_{k+1}]$, we have $f^2(x)=F(x)$ because $f=f_k$ and $f_k$ is an  iterative root of order $2$ of $F|_{[c_k,c_{k+1}]}$. For $x\in[c_i,c_{i+1}]$ and $i=k-1,k+1,\ell_{j-1},\cdots,\ell_j-1~\text{with}~j\equiv1~\text{or}~3\\(\!\!\mod 4)$ and  $r_j,\cdots,r_{j+1}-1~\text{with}~j\equiv1~\text{or}~3(\!\!\mod 4)$, we have
	\begin{equation*}
	f^2(x)= f_k\circ f_k^{-1}\circ F(x)=F(x).
	\end{equation*}
	For $x\in[c_i,c_{i+1}]$, $i=\ell_{j-1},\cdots,\ell_j-1$ with $j\equiv2(\!\!\mod4)$ and $ r_j,\cdots,r_{j+1}-1$ with $j\equiv0(\!\!\mod4)$, we have
	\begin{equation*}
	f^2(x)=f_{k-1} \circ f_{k-1}^{-1}\circ F(x)=F(x).
	\end{equation*}
	For $x\in[c_i,c_{i+1}]$, $i=\ell_{j-1},\cdots,\ell_j-1$ with $j\equiv0(\!\!\mod4), r_j,\cdots,r_{j+1}-1$ with $j\equiv2(\!\!\mod4)$, we have
	\begin{equation*}
	f^2(x)=f_{k+1}\circ f_{k+1}^{-1} \circ F(x)=F(x).
	\end{equation*}
Thus, it is proved that $f$ is an iterative root of order $2$ of $F$. Clearly, $f=f_k$ on $[c_k,c_{k+1}]$ is strictly increasing. Finally, by \eqref{firexten} and
\eqref{ficlj13}, $f([c_i,c_{i+1}]) \subset [c_k,c_{k+1}]$ for $i=k-1,k+1,\ell_{j-1},\cdots,\ell_j-1~\text{with}~j\equiv1~\text{or}~3(\!\!\mod 4), r_j,\cdots,r_{j+1}-1~\text{with}~j\equiv1~\text{or}~3(\!\!\mod 4)$, $f([c_i,c_{i+1}]) \subset [c_{k-1},c_k]$ for $i=\ell_{j-1},\cdots,\ell_j-1$ with $j\equiv2(\!\!\mod4)$, $ r_j,\cdots,r_{j+1}-1$ with $j\equiv0(\!\!\mod4)$ by \eqref{ficlj2} and $f([c_i,c_{i+1}]) \subset [c_{k+1},c_{k+2}]$ for $i=\ell_{j-1},\cdots,\ell_j-1$ with $j\equiv0(\!\!\mod4), r_j,\cdots,r_{j+1}-1$ with $j\equiv2(\!\!\mod4)$ by \eqref{fcl1}. Thus, $H(f)=2$ by Lemma \ref{height}. This proves the claim and we complete the whole proof.
\end{proof}

\begin{proof}[\bf Proof of Theorem \ref{LD}]
{\it Step 1:} Construct a continuous and strictly decreasing iterative root $f_0$ of order $2$ of $F$ on $[c_0,c_1]$.

Since $F$ is reversing-correspondence on $[c_0,c_1]$, by \eqref{ldc}, Theorem $15.9$ of \cite{Kuczma} and Lemma \ref{diteendps}, we can construct a continuous and strictly decreasing function $f_0:[c_0,c_1]\to[c_0,c_1]$ such that
\begin{equation}
f_0(c_0)=F(c_1),~~~~~f_0(c_1)=c_0,
\label{f0endps}
\end{equation}
and $f_0^2(x)=F(x)$ for all $x\in[c_0,c_1]$.

{\it Step 2}: Define a function on $[c_1,c_2]$.

Define $f_1:[c_1,c_2]\to[c_0,c_1]$ by
\begin{equation}
f_1(x):=f_0^{-1}\circ F(x),~~~~~ x\in [c_1,c_2].
\label{ldf1}
\end{equation}
Since $f_0$ is strictly decreasing on $[c_0,c_1]$ and $F$ is strictly decreasing on $[c_1,c_2]$, by \eqref{ldc} and \eqref{f0endps} we have $F([c_1,c_2])=[c_0,F(c_1)] = [f_0(c_1),f_0(c_0)]$, that is, $F([c_1,c_2])$ is included in the range of $f_0$, implying that $f_1$ in \eqref{ldf1} is well defined.

{\it Step 3}: Define a function on each $[c_i,c_{i+1}]$ for $2\le i\le v$.

For each $2\le i\le v$, define a function $f_i:[c_i,c_{i+1}]\to[c_1,c_2]$ by
\begin{equation}
f_i(x):=f_1^{-1}\circ F(x),~~~~~x\in[c_i,c_{i+1}].
\label{ldfi}
\end{equation}
Note that $f_1$ is strictly increasing, $f_1(c_1)=c_0$ and $f_1(c_2)=c_1$. This implies that $f_i$ in \eqref{ldfi} is well defined since $F([c_i,c_{i+1}])\subset[c_0,c_1]$.

{\it Step 4}: Joint $f_i~(0\le i\le v)$ to give an iterative root of order $2$ as desired in Theorem \ref{LD}.

Let $f(x):=f_i(x)$ for all $x\in[x_i,x_{i+1}]$ and $0\le i\le v$. Now we check that $f$ is a continuous  iterative root of order $2$ of $F$ such that $f$ is strictly decreasing on $[c_0,c_1]$ and $H(f)=2$. Clearly, $f_i$ is continuous on its domain for all $0\le i\le v$. In order to prove the continuity of $f$ on $I$, it remains to show that $f$ is continuous at points $\{c_i\}_{i=1}^v$. Hence, by \eqref{f0endps} and \eqref{ldf1}, we get $f_1(c_1)=f_0^{-1}\circ F(c_1)=c_0=f_0(c_1)$. This implies that $f$ is continuous at $c_1$. Furthermore, by \eqref{ldf1}, \eqref{ldc} and \eqref{ldfi}, we have
$$
f_1(c_2)=f_0^{-1}\circ F(c_2)=f_0^{-1}(c_0)=c_1=f_1^{-1}(c_0)=f_1^{-1}\circ F(c_2)=f_2(c_2).
$$
This shows that $f$ is continuous at $c_2$. Note that  \eqref{ldfi} yields $f_i(c_{i+1})=f_1^{-1}\circ F(c_{i+1})=f_{i+1}(c_{i+1})$ for every $2\le i\le v-1$. It follows that $f$ is continuous at $\{c_i\}_{i=3}^v$. Hence, we proved the continuity of  $f$ on $I$.

On the other hand, it is easy to see that $f^2(x)=F(x)$ for all $x\in[c_0,c_1]$ since $f=f_0$ and $f_0$ is an iterative root of order $2$ of $F$ on $[c_0,c_1]$, as shown in {\it Step 1}. For every $x\in [c_1,c_2]$, by \eqref{ldf1} we have
$$
f^2(x)=f_0\circ f_0^{-1}\circ F(x) =F(x).
$$
For $x\in [x_2,c_{v+1}]$, we have
$$
f^2(x)=f_1\circ f_1^{-1}\circ F(x) =F(x)
$$
by \eqref{ldfi}. This proves that $f$ is an iterative root of order $2$ of $F$.

Note that $f$ is strictly decreasing on $[c_0,c_1]$ since $f=f_0$ on $[c_0,c_1]$.
By \eqref{ldf1} and  \eqref{ldfi}, we have $f([c_1,c_2])\subset[c_0,c_1]$ and $f([c_i,c_{i+1}])\subset[c_1,c_2]$ for all $2\le i\le v$, yielding that $f^2(I)\subset[c_0,c_1]$ but $f(I)\not\subset[c_0,c_1]$. It follows that $H(f)=2$ by Lemma \ref{height}. The proof is completed.
\end{proof}

\begin{proof}[\bf Proof of Theorem 2.4]
{\it Step1}: Construct a continuous and strictly decreasing iterative root $f_k$ of order $2$ of $F$ on $[c_k,c_{k+1}]$.

Note that $F$ is reversing-correspondence on $[c_k,c_{k+1}]$. Under \eqref{mdc}, \eqref{mdc2} or \eqref{mdc3}, we infer from Theorem $15.9$ of \cite{Kuczma} and Lemma \ref{diteendps} that there exists a continuous and strictly decreasing function $f_k:[c_k,c_{k+1}]\to[c_k,c_{k+1}]$ such that
\begin{equation}
f_k(c_k)=F(c_{k-1}),~~~~~f_k(c_{k+1})=F(c_{k+2}),
\end{equation}
and $f_k^2(x)=F(x)$ for all $x\in[c_k,c_{k+1}]$.

{\it Step 2}: Define functions on intervals $[c_{k-1},c_k]$ and $[c_{k+1},c_{k+2}]$, respectively.

{\it Step 3}: Define a function on interval $[c_0,c_{k-1}]$.

{\it Step 4}: Define a function on interval $[c_{k+2},c_{v+1}]$.

{\it Step 5}: Define a continuous  iterative root of order $2$ by jointing above functions. Since {\it Step 2} -{\it Step 5} are similar to those in the proof of Theorem \ref{MI}, we omit their details. The proof of Theorem \ref{MD} is completed.
\end{proof}

\end{document}